\newcommand{\C}{{\mathbb C}}
\newcommand{\R}{{\mathbb R}}
\newcommand{\Z}{{\mathbb Z}}
\def\Mat{{\rm Mat}}
\newcommand{\G}{\Gamma}
\newcommand{\de}{\em}
\def\Span{{\rm Span}}
\def\dim{{\rm dim}}
\newtheorem{theorem}{Theorem}[section]
\newtheorem{lemma}[theorem]{Lemma}
\newtheorem{corollary}[theorem]{Corollary}
\newtheorem{proposition}[theorem]{Proposition}
\theoremstyle{definition}
\newtheorem{note}[theorem]{Note}
\newtheorem{definition}[theorem]{Definition}
\newtheorem{notation}[theorem]{Notation}
\newtheorem{rem}[theorem]{Remark}
\begin{document}
\pagestyle{plain}

\title{The quantum adjacency algebra and
\\subconstituent algebra of a graph}
\author{
Paul Terwilliger
\\[+3pt]
{\normalsize Department of Mathematics, University of Wisconsin,} \\
{\normalsize 480 Lincoln Drive, Madison, WI 53706-1388 USA} \\[+3pt]
{\normalsize email: \texttt{terwilli@math.wisc.edu}}\\[+6pt] 
\and 
Arjana \v{Z}itnik
\\[+3pt]
{\normalsize Faculty of Mathematics and Physics, University of Ljubljana,   and IMFM} \\
{\normalsize Jadranska 19, 1000 Ljubljana, Slovenia} \\[+3pt]
{\normalsize email: \texttt{Arjana.Zitnik@fmf.uni-lj.si}} 
}
\date{September 27, 2017}
\maketitle

\begin{abstract}
Let $\G$ denote a finite, undirected, connected graph, with vertex set $X$.
Fix  a vertex $x \in X$. Associated with $x$ is a certain subalgebra $T=T(x)$
of $\Mat_X(\C)$, called the subconstituent algebra.
The algebra $T$ is semisimple.
Hora and Obata introduced a certain  subalgebra $Q \subseteq T$,
called the quantum adjacency algebra. The algebra $Q$ is semisimple.
In this paper we investigate how $Q$ and $T$ are related.
In many cases $Q=T$, but this is not true in general.
To clarify this issue, we introduce the notion of 
quasi-isomorphic irreducible $T$-modules. 
We show that the following are equivalent:
(i) $Q \ne T$; 
(ii)  there exists a pair of quasi-isomorphic irreducible $T$-modules 
that have different endpoints.
To illustrate this result we consider two  examples. 
The first example concerns the Hamming graphs.
The second example concerns the bipartite dual polar graphs.  
We show that for the first example $Q=T$, and for the second
example $Q \ne T$. 

\end{abstract}
\noindent
{\bf Keywords:} subconstituent algebra, Terwilliger algebra, 
quantum adjacency algebra, quasi-isomorphism, quantum decomposition.

\noindent
{\bf 2010 Mathematics Subject Classification:}  
05E15,   
05E30.   

\section{Introduction}

In \cite{Talg} the subconstituent algebra  was introduced, and used
to investigate commutative association schemes.
Since then the subconstituent algebra has received considerable attention;
some notable papers are \cite{CaughmanWolff,Curtin,Tanaka,Go,ItoNomuraTerwilliger,
ItoTanabeTerwilliger,Lee,MacLeanTerwilliger,MiklavicTerwilliger,Morales,
Muzychuk,Pascasio,
Schrijver,
Terw1,Worawann}.

In the present paper we consider the subconstituent algebra of a graph.
Let $\G$ denote a finite, undirected, connected graph, with vertex set $X$
and path-length distance function $\partial$.
Let $A \in \Mat_X(\C)$ denote the adjacency matrix of $\G$.
Fix  $x \in X$ and define
$D=\max \{\partial(x,y)\, |\,y \in X\}$.
For $0 \le i \le D$ let
$E_i^*$ denote the diagonal matrix in $\Mat_X(\C)$ 
with $(y,y)$-entry
$$
(E_i^*)_{yy}\, =\, \left\{ \begin{array}{ll}
                          1 & \mbox{if} \ \ \partial(x,y)=i,\\
                          0 & \mbox{if} \ \ \partial(x,y)\ne i\\
                           \end{array} \right. 
\ \ \ \ \ (y \in X).
$$
Let $T$ denote the subalgebra of $\Mat_X(\C)$ generated by 
$A$ and $\{E_i^*\}_{i=0}^D$. We call  $T$ the subconstituent algebra
(or Terwilliger algebra) of $\G$ with respect to  $x$ \cite[p.~380]{Talg}.
The algebra $T$ is semisimple (see Section \ref{sec:prelim}).

In \cite{HoraObata}  A. Hora and N. Obata introduced  the quantum  adjacency algebra,  
and used it to investigate quantum probability. This algebra is described as follows.
Define 
$$
L = \sum_{i=1}^{D}E_{i-1}^*AE_{i}^*, \ \ \ \ \ \ \ \
F = \sum_{i=0}^{D}E_{i}^*AE_{i}^*, \ \ \ \ \ \ \ \
R = \sum_{i=0}^{D-1}E_{i+1}^*AE_{i}^*.
$$
Observe that  $L,F,R \in T$. The equation $A = L + F + R$
is called the quantum decomposition of $A$ with respect to $x$ \cite[Definition 2.24]{HoraObata}.
Let $Q$ denote the subalgebra of $T$ generated by $L,F,R$. 
We call  $Q$  the  quantum adjacency algebra of $\G$
with respect to $x$ \cite[p. 78]{HoraObata}. 
The algebra $Q$ is semisimple (see Lemma \ref{lem:Qct}).

In this paper we investigate how $Q$ and $T$ are related.
In many cases $Q=T$, 
but it turns out that this is not always true.
To clarify this issue, we compare the $Q$-modules and $T$-modules.
Let $W$ denote a $T$-module. 
For the $T$-module $W$ 
the restriction of the $T$-action
to $Q$ turns $W$ into a $Q$-module.
Assume that the $T$-module $W$ is irreducible. 
We show that the $Q$-module $W$ is irreducible.
Let $U$ and $W$ denote irreducible $T$-modules.
Then the $Q$-modules $U,W$ are irreducible.
Assume that the $T$-modules $U,W$ are isomorphic.
By construction the $Q$-modules $U,W$ are isomorphic.
Next assume that the $T$-modules $U,W$ are not isomorphic.
We find a necessary and sufficient condition for the $Q$-modules
$U,W$ to be isomorphic.
To describe this condition, we introduce the notion of 
quasi-isomorphic irreducible $T$-modules.
The main result of the paper is that the following are equivalent:
(i) $Q \ne T$; 
(ii)  there exists a pair of quasi-isomorphic irreducible $T$-modules 
that have different endpoints.
To illustrate the main result we consider two  examples. 
The first example concerns the Hamming graphs \cite[p. 261]{BCN}. 
The second example concerns the bipartite dual polar graphs \cite[p. 274]{BCN}.  
We show that for the first example $Q=T$, and for the second
example $Q \ne T$.

The paper is organized as follows. 
In Section 2 we recall some background concerning 
semisimple algebras and their modules. 
In Section 3 we recall the subconstituent algebra $T$.
In Section 4 we consider the quantum adjacency algebra $Q$
and  its relationship to $T$.
In Section 5 we describe  a $\Z$-grading for $Q$ and $T$.
In Sections 6  and 7 we compare the irreducible $Q$-modules and $T$-modules.
In Section 8 we introduce the notion of quasi-isomorphic irreducible $T$-modules.
In Section 9 we compare the algebras $Q$ and $T$;
Theorem \ref{thm:QisT} is the main result of the paper.
In Section 10 we consider a type of irreducible $T$-module,
said to be thin.
In Section 11  we discuss the Hamming graphs and
the bipartite dual polar graphs.

\section{Preliminaries}
\label{sec:prelim}

In this section we review some basic facts 
concerning algebras and their modules.

Let $X$ denote a nonempty finite set.
Let  $\Mat_X(\C)$  denote the $\C$-algebra consisting of the matrices 
whose rows and columns are indexed by $X$ and
whose entries are in $\C$. 
Let  $ I \in \Mat_X(\C)$ denote the identity matrix.
Let $V = \C^X$ denote the vector space over $\C$ consisting of the column 
vectors whose coordinates are indexed by $X$ and whose entries are in $\C$.
Observe that $\Mat_X(\C)$ acts on $V$ by left multiplication.
We endow $V$ with the Hermitean inner product 
$\langle \, , \, \rangle$ that satisfies
$\langle u, v \rangle = u^t \overline{v}$ for $u, v \in V$. 
Here $t$ denotes transpose and $\overline{\phantom{v}}$  denotes complex conjugation.
%
Let $W,W'$ denote 
nonempty subsets of $V$.
These subsets are said
to be {\em orthogonal} whenever $\langle w,w^\prime \rangle=0$ for all 
$w \in W$ and $w^\prime \in W^\prime$.

Let $S$ denote a  subalgebra of $\Mat_X(\C)$.
By an {\de $S$-module} we mean a subspace $W$ of $V$ such that
$SW \subseteq W$. 
We refer to $V$ itself as the {\de standard module} for $S$.
An $S$-module $W$ is said to be {\de irreducible} whenever $W$ is nonzero 
and contains no $S$-module other than $0$ and $W$. 

Let $U$ and $W$ denote  $S$-modules. 
By an {\de isomorphism of $S$-modules} from $U$ to $W$ we mean
a vector space isomorphism  $\sigma:U \to W$ such that 
$(\sigma s - s \sigma)U=0$  for all $s \in S$.
The  $S$-modules $U$ and $W$ are called {\de  isomorphic} 
whenever there exists 
an  isomorphism of $S$-modules from $U$ to $W$.

For the rest of this section, assume that $S$ is closed under
the conjugate-transpose map.
Then $S$ is semisimple  \cite[Lemma 3.4]{Talg}.
Let $W$ denote an $S$-module. 
Then its orthogonal complement $W^\perp$ is an $S$-module.
It follows that every $S$-module is an orthogonal direct sum of
irreducible $S$-modules. 
In particular, $V$ is an
 orthogonal direct sum of irreducible $S$-modules.

Let $S^{\vee}$ denote the set of isomorphism classes of irreducible $S$-modules.
The elements of $S^{\vee}$ are called {\de types}. 
For $\lambda \in S^{\vee}$ let $V_\lambda$ denote the subspace of $V$ spanned 
by the irreducible $S$-modules of type $\lambda$. 
Observe that $V_\lambda$ is an $S$-module. We have 
\begin{eqnarray}                                 \label{eq:decompose}
V = \sum_{\lambda \in S^{\vee}} V_\lambda \ \ \ \ \mbox{(orthogonal direct  sum)}.
\end{eqnarray}
For  $\lambda \in S^{\vee}$ let 
$d_\lambda$ denote the
dimension of an
 irreducible $S$-module 
that has type  $\lambda$. 

\begin{proposition}                  \label{thm:algebradim}
{\rm \cite[Proposition 2.2]{TomiyamaSRG}}
Let $S$ denote a subalgebra of $\Mat_X(\C)$ that
is closed under the conjugate-transpose map.
Then
$$
\dim \, S = \sum_{\lambda \in S^{\vee}} \ d_\lambda^2.
$$
\end{proposition}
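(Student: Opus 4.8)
The plan is to realize $S$ concretely as an algebra of operators on the standard module and then read off its dimension from the type decomposition \eqref{eq:decompose}. Since $S$ is a subalgebra of $\Mat_X(\C) = \mathrm{End}(V)$, where $V=\C^X$, we may regard elements of $S$ as operators on $V$. Because $V=\bigoplus_{\lambda \in S^\vee} V_\lambda$ and each $V_\lambda$ is an $S$-module, restriction gives an algebra homomorphism $s \mapsto (s|_{V_\lambda})_{\lambda \in S^\vee}$ from $S$ into $\bigoplus_{\lambda \in S^\vee} \mathrm{End}(V_\lambda)$, and this map is injective since $V$ is the sum of the $V_\lambda$. Writing $S_\lambda$ for the image of $S$ in $\mathrm{End}(V_\lambda)$ under restriction, we have $\dim S = \sum_{\lambda} \dim S_\lambda$ provided the image is the full direct sum $\bigoplus_\lambda S_\lambda$. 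Thus the proof reduces to two claims: (a) the image of $S$ equals $\bigoplus_{\lambda \in S^\vee} S_\lambda$, and (b) $\dim S_\lambda = d_\lambda^2$ for each $\lambda$.

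For (b), fix $\lambda$ and use that $S$ is semisimple (guaranteed by its closure under the conjugate-transpose map) to write $V_\lambda$ as an orthogonal direct sum of $m_\lambda \ge 1$ irreducible $S$-modules, each of type $\lambda$ and dimension $d_\lambda$. Because $\C$ is algebraically closed, Schur's lemma gives $\mathrm{End}_S(W)=\C I$ for an irreducible $S$-module $W$, so the commutant $C=\mathrm{End}_S(V_\lambda)$ is isomorphic to $\Mat_{m_\lambda}(\C)$. The Jacobson density theorem (in the finite-dimensional setting, where density forces equality) then identifies $S_\lambda$ with the full bicommutant $\mathrm{End}_C(V_\lambda)$; since $V_\lambda \cong W\otimes \C^{m_\lambda}$ with $C$ acting on the second factor, this bicommutant is $\Mat_{d_\lambda}(\C)$. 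Hence $\dim S_\lambda = d_\lambda^2$.

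For (a), I would exploit that distinct types give pairwise non-isomorphic irreducible $S$-modules, so the $V_\lambda$ are pairwise non-isomorphic as $S$-modules. By semisimplicity, $S$ possesses central primitive idempotents $e_\lambda$ indexed by $S^\vee$ with $e_\lambda$ acting as the identity on $V_\lambda$ and as zero on $V_\mu$ for $\mu\ne\lambda$. Multiplying by these idempotents lets me project $S$ onto each $S_\lambda$ independently, and comparing with (b) shows the restriction map surjects onto every factor, so its image is all of $\bigoplus_\lambda S_\lambda$. Combining (a) and (b) yields $\dim S = \sum_{\lambda \in S^\vee} \dim S_\lambda = \sum_{\lambda \in S^\vee} d_\lambda^2$.

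The hard part will be claim (a): showing that the image of $S$ is the entire direct sum and not merely some diagonal subalgebra of $\bigoplus_\lambda S_\lambda$. The decisive input is that no element of $S$ can carry an irreducible module of one type to one of a different type, which is precisely Schur's lemma together with the non-isomorphism of the distinct isotypic components; once the separating idempotents $e_\lambda$ are produced, the independence of the projections and hence the direct-sum form of the image follow, and the dimension formula drops out.
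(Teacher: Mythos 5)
Your proof is correct, but note that the paper itself contains no proof of this proposition to compare against: it is quoted verbatim from Tomiyama--Yamazaki \cite[Proposition 2.2]{TomiyamaSRG}, so your write-up supplies an argument where the paper only supplies a citation. What you give is the standard Wedderburn/double-commutant proof, and both halves check out: claim (b) via Schur's lemma over $\C$ plus Jacobson density (which in finite dimensions does yield $S_\lambda = \mathrm{End}_C(V_\lambda) \cong \Mat_{d_\lambda}(\C)$, since $V_\lambda$ is a semisimple $S$-module), and claim (a) via the central idempotents $e_\lambda$, which correctly rules out the ``diagonal subalgebra'' failure mode you rightly identify as the crux. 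Two small points deserve to be made airtight. First, when you assert that $S$ ``possesses central primitive idempotents $e_\lambda$ indexed by $S^\vee$,'' the Wedderburn blocks of the abstract semisimple algebra $S$ are a priori indexed by the isomorphism classes of \emph{all} simple $S$-modules, whereas $S^\vee$ consists of the classes realized inside $V$; the bijection between the two index sets uses that $S$ acts faithfully on $V$ (a block whose idempotent killed all of $V$ would be a nonzero element of $S \subseteq \Mat_X(\C)$ acting as zero, which is absurd), and this also gives each $\lambda \in S^\vee$ its block idempotent with the stated action on the isotypic decomposition. Second, the statement implicitly requires $S$ to be unital (as the algebras $T$ and $Q$ in the paper are): for a non-unital $*$-closed $S$, one-dimensional subspaces of $(I-e)V$ (where $e$ is the unit of $S$) would constitute a spurious zero-action type inflating the right-hand side. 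Finally, a streamlining worth knowing: since $S$ is $*$-closed and unital, the finite-dimensional double commutant theorem gives $S = S''$ directly, and computing $S' \cong \bigoplus_\lambda \Mat_{m_\lambda}(\C)$, $S'' \cong \bigoplus_\lambda \Mat_{d_\lambda}(\C)$ delivers (a) and (b) in one stroke, without separately producing the idempotents; this is presumably close to the argument in the cited source.
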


The following notation will be useful.

\begin{notation}
Given subspaces $Y,Z$ of the vector space $\Mat_X(\C)$ define
$$
YZ = \Span\{yz| \ y \in Y, z \in Z \}.
$$
\end{notation}

\section{The  subconstituent algebra}
\label{sec:subconstalgebra}

In this section we recall the subconstituent algebra; 
see  
\cite{Talg} for more background information.

Throughout the paper $\G$ denotes a finite, undirected, connected
graph, without loops or multiple edges, with vertex set $X$ 
and path-length distance function $\partial$.
Denote by $A$ the matrix in $\Mat_X(\C)$ with 
$(x,y)$-entry 
$$
A_{xy}\, =\, \left\{ \begin{array}{ll}
                          1 & \mbox{if} \ \ \partial(x,y)=1,\\
                          0 & \mbox{if} \ \ \partial(x,y)\ne 1\\
                           \end{array} \right. 
\ \ \ \ \ (x,y \in X).
$$
We call $A$ the {\de adjacency matrix} of $\G$. 
Note that $A$ is real and symmetric. 
%
Let $M$ denote the subalgebra of $\Mat_X(\C)$ generated by $A$.
We call $M$ the {\de adjacency algebra} of $\G$.

We now recall the dual adjacency algebras of $\G$ \cite[p.~378]{Talg}. 
For the rest of this paper, fix  $x \in X$. Define $D=D(x)$ by
$D=\max \{\partial(x,y)\, |\,y \in X\}$.
We call $D$ the {\de diameter of $\G$ with respect to $x$}.
For $0\le i \le D$ let $E_i^* = E_i^*(x)$ 
denote the diagonal matrix in $\Mat_X(\C)$ with $(y,y)$-entry
\begin{equation}  
\label{eq:defEi}
(E_i^*)_{yy}\, =\, \left\{ \begin{array}{ll}
                          1 & \mbox{if} \ \ \partial(x,y)=i,\\
                          0 & \mbox{if} \ \ \partial(x,y)\ne i\\
                           \end{array} \right. 
\ \ \ \ \ (y \in X).
\end{equation}
We call $E_i^*$ the {\de $i$-th dual idempotent of $\G$ with 
respect to $x$.}
The following definition is for notational convenience.
For $i \in \mathbb Z$,
\begin{equation}                               \label{eq:Eiis0}
E_i^*=0 \ \ \ \ \ {\rm unless} \ \  0 \leq i \leq D. 
\end{equation}
Observe  that 
\begin{equation}                   \label{eq:Isum}
I=\sum_{i=0}^D E_i^*
\end{equation}
and
\begin{equation}                   \label{eq:EiEj}
E_i^*E_j^*=\delta_{ij}E_i^*  \qquad \qquad  (0\le i,j\le D).
\end{equation}
The matrices $\{ E_i^*\}_{i=0}^D$ form a basis for 
a commutative subalgebra $M^*=M^*(x)$ of  $\Mat_X(\C)$.
We call $M^*$ the {\de dual adjacency algebra of $\G$
with respect to $x$}. Note that
\begin{equation}               \label{eq:Vsum}
V=\sum_{i=0}^DE_i^*V  \ \ \ \ \mbox{(orthogonal direct  sum)}.
\end{equation}

We now recall how $M$ and $M^*$ are related. 
By 
(\ref{eq:defEi}) 
and the definition of $A$,
\begin{equation}                                  \label{combcoeff}
E_i^* A E_j^* = 0\ \ \ \ \ \mbox{if} \ \  |i-j|>1, 
\qquad \quad (0 \leq i,j \leq D).
\end{equation}

Let $T = T (x)$ denote the subalgebra of $\Mat_X(\C)$ generated by 
$M$ and $M^*$. We call  $T$ the {\de subconstituent algebra 
of $\G$ with respect to  $x$} \cite[p.~380]{Talg}.
The algebra $T$ is often called the {\de Terwilliger algebra} \cite{Caughman,Curtin}.


Observe that $T$ is generated by  $A$ and $\lbrace E_i^*\rbrace_{i=0}^D$.
These generators are real and symmetric, so
$T$ is closed under the conjugate-transpose map. Therefore our
discussion in Section \ref{sec:prelim} applies to $T$. 
By Proposition \ref{thm:algebradim},
\begin{equation}                                  \label{eq:dimT1}
\dim \, T = \sum_{\lambda \in T^\vee} d_\lambda^2.
\end{equation}

Let $W$ denote an irreducible $T$-module. 
Observe that $W$ is an orthogonal direct sum of the nonzero subspaces among 
$E_0^*W,\dots, E_D^*W$. 
Define the \emph{endpoint} $r=r(W)$ by
$$
r = \min\{i \, \vert \ 0 \le i \le D, E_i^*W \ne 0\}.
$$ 
Define the {\de diameter} $d=d(W)$  by 
$$
d=\big|\{i \, \vert \ 0 \le i \le D, E_i^*W \ne 0\}\big|-1.
$$ 
Using the idea from
\cite[Lemma 3.9(ii)]{Talg}  we have
$E_i^*W \ne 0$  if and only if $r \le i \le r+d$  $(0 \le i \le D)$.
By the above comments,
\begin{equation}                                       \label{eq:Wsum}
W = \sum_{i=0}^{d}\ E_{r+i}^*W     \ \ \ \ \mbox{(orthogonal direct  sum)}.
\end{equation}
Note that isomorphic irreducible $T$-modules
have the same endpoint and the same diameter.



\section{The quantum adjacency algebra}

In this section we recall from  \cite{HoraObata}
the  quantum adjacency algebra.

\begin{definition}  
\label{def:LFR}
{\rm \cite[p.~10]{DickieTerwilliger}} 
Define  
\begin{align*}
L = \sum_{i=1}^{D}E_{i-1}^*AE_{i}^*,
\qquad \quad
F = \sum_{i=0}^{D}E_{i}^*AE_{i}^*,
\qquad \quad
R = \sum_{i=0}^{D-1}E_{i+1}^*AE_{i}^*.
\end{align*}
\end{definition}

\begin{lemma} The matrices $L,F,R$ are contained in $T$.
\end{lemma}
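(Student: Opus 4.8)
The plan is to use only the fact that $T$ is a subalgebra of $\Mat_X(\C)$ containing the generators $A$ and $E_0^*, \dots, E_D^*$. Recall that a subalgebra is, by definition, closed under matrix addition, scalar multiplication, and matrix multiplication.

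First I would observe that for any indices $0 \le a, b \le D$, the product $E_a^* A E_b^*$ lies in $T$: each of the three factors is a generator of $T$, and $T$ is closed under multiplication. Next, each of $L, F, R$ is defined in Definition \ref{def:LFR} as a finite sum of matrices of exactly this form; for instance $L = \sum_{i=1}^{D} E_{i-1}^* A E_i^*$, where every summand is $E_a^* A E_b^*$ with $a = i-1$ and $b = i$, and similarly for $F$ and $R$. Since $T$ is closed under addition, these finite sums again lie in $T$, which yields $L, F, R \in T$ and completes the argument.

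I do not expect any genuine obstacle here; the result is immediate from the description of $T$ as the subalgebra generated by $A$ and $\{E_i^*\}_{i=0}^D$, together with the closure properties of a subalgebra. The only point requiring a little care is the bookkeeping of the index ranges in the three sums, but in every case each summand has the form $E_a^* A E_b^*$ with $0 \le a, b \le D$, so the closure argument applies uniformly across all three matrices.
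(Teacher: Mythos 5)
Your proof is correct and is essentially the paper's own argument: the paper proves this lemma simply by citing Definition \ref{def:LFR}, which is exactly the closure-of-a-subalgebra observation you spell out in detail. Your version just makes the implicit bookkeeping explicit.
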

\begin{proof} 
By Definition \ref{def:LFR}.
\end{proof}

\begin{lemma}
\label{lem:LRFt}
The matrices $L,F,R$ are real. Moreover $L^t = R$ and $F^t = F$. 
\end{lemma}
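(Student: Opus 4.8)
The plan is to verify the three claims directly from Definition~\ref{def:LFR}, using only two elementary facts established earlier: that $A$ is real and symmetric (so $A^t = A$ and $\overline{A} = A$), and that each dual idempotent $E_i^*$ is a real diagonal matrix (so $(E_i^*)^t = E_i^*$ and $\overline{E_i^*} = E_i^*$). Since $L$, $F$, $R$ are each defined as a sum of products of the form $E_a^* A E_b^*$, and each of $E_a^*$, $A$, $E_b^*$ is real, every summand is real; hence $L$, $F$, $R$ are real. This disposes of the first assertion with essentially no computation.

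For the transpose relations, I would use the anti-homomorphism property of transpose, namely $(BCD)^t = D^t C^t B^t$, together with the fact that each factor is symmetric. First I would compute
\[
F^t = \Bigl(\sum_{i=0}^{D} E_i^* A E_i^*\Bigr)^t
 = \sum_{i=0}^{D} (E_i^*)^t A^t (E_i^*)^t
 = \sum_{i=0}^{D} E_i^* A E_i^* = F,
\]
so $F$ is symmetric. For the claim $L^t = R$, I would transpose $L$ term by term:
\[
L^t = \Bigl(\sum_{i=1}^{D} E_{i-1}^* A E_i^*\Bigr)^t
 = \sum_{i=1}^{D} E_i^* A E_{i-1}^*.
\]

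The final step is a reindexing of this last sum to recognize it as $R$. Setting $j = i-1$, the index $i$ ranges over $1 \le i \le D$ precisely when $j$ ranges over $0 \le j \le D-1$, and the summand $E_i^* A E_{i-1}^*$ becomes $E_{j+1}^* A E_j^*$, so
\[
L^t = \sum_{j=0}^{D-1} E_{j+1}^* A E_j^* = R,
\]
which is exactly the defining expression for $R$. I expect no genuine obstacle here; the only point requiring a little care is getting the summation bounds to match after the shift of index, and confirming that the conventions in~(\ref{eq:Eiis0}) make any boundary terms vanish so the reindexing is clean. Everything else is routine application of the transpose anti-homomorphism to the symmetric generators.
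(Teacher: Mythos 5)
Your proposal is correct and follows essentially the same route as the paper, whose proof simply notes that $A$ is real and symmetric, each $E_i^*$ is real and diagonal, and the claims follow from Definition~\ref{def:LFR}; you have merely written out the transpose anti-homomorphism computation and the index shift $j=i-1$ that the paper leaves implicit.
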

\begin{proof}
The matrix $A$ is real and symmetric. 
For $0 \le i \le D$ the matrix $E_i^*$ is real and diagonal.
The results follow by Definition \ref{def:LFR}. 
\end{proof}

\begin{lemma}      \label{lem:LFRsubspace}
For $0 \le i \le D$ the matrices $L,F,R$ act on  $E_i^*V$ in the following way:
$$
LE_i^*V \subseteq E_{i-1}^*V, \ \ \ \ \ \ \ \
FE_i^*V \subseteq E_{i}^*V, \ \ \ \ \ \ \ \
RE_i^*V \subseteq E_{i+1}^*V.
$$
\end{lemma}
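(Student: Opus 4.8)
The plan is to compute the products $LE_i^*$, $FE_i^*$, and $RE_i^*$ directly from Definition \ref{def:LFR}, using the orthogonality relation \eqref{eq:EiEj} to collapse each defining sum to a single term. First I would substitute the definition of $L$ and multiply on the right by $E_i^*$ to obtain $LE_i^* = \sum_{j=1}^{D} E_{j-1}^*AE_j^*E_i^*$. Since $E_j^*E_i^* = \delta_{ji}E_i^*$ by \eqref{eq:EiEj}, every term with $j \ne i$ vanishes and only the $j=i$ term survives, giving $LE_i^* = E_{i-1}^*AE_i^*$. The same manipulation applied to $F$ and $R$ yields $FE_i^* = E_i^*AE_i^*$ and $RE_i^* = E_{i+1}^*AE_i^*$.

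With these three identities in hand, the containments are immediate. For any matrix $N \in \Mat_X(\C)$ and any index $k$, the range of $E_k^*N$ lies in the range of $E_k^*$, which is $E_k^*V$. Applying this with $N = AE_i^*$ and $k = i-1,\,i,\,i+1$ respectively gives $LE_i^*V \subseteq E_{i-1}^*V$, $FE_i^*V \subseteq E_i^*V$, and $RE_i^*V \subseteq E_{i+1}^*V$, as claimed.

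The only point requiring care is the behavior at the boundary values $i=0$ and $i=D$. When $i=0$ the index $j=i$ lies outside the summation range $1 \le j \le D$ defining $L$, so the sum for $LE_0^*$ is empty and $LE_0^*=0$; this is consistent with the asserted containment because $E_{-1}^*=0$ by the convention \eqref{eq:Eiis0}, whence $E_{-1}^*V=0$. Symmetrically, $RE_D^*=0$ agrees with $E_{D+1}^*V=0$. Thus the single identity $LE_i^* = E_{i-1}^*AE_i^*$, and likewise $RE_i^* = E_{i+1}^*AE_i^*$, holds uniformly for all $0 \le i \le D$ once $E_{-1}^*$ and $E_{D+1}^*$ are read as zero. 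I do not anticipate any genuine obstacle here: the whole argument is a short bookkeeping computation, and the only thing to watch is keeping the summation indices aligned with the boundary conventions.
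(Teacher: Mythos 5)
Your proposal is correct and follows exactly the route the paper intends: its one-line proof cites \eqref{eq:EiEj} and Definition \ref{def:LFR}, which is precisely your computation collapsing the defining sums to $LE_i^* = E_{i-1}^*AE_i^*$, $FE_i^* = E_i^*AE_i^*$, $RE_i^* = E_{i+1}^*AE_i^*$ and then reading off the ranges. Your careful handling of the boundary cases $i=0$ and $i=D$ via the convention \eqref{eq:Eiis0} just makes explicit what the paper leaves implicit.
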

\begin{proof} 
By 
\eqref{eq:EiEj}
and
Definition \ref{def:LFR}.
\end{proof}

\begin{lemma}                        \label{lemma:AisLFR}
{\rm \cite[p.~10]{DickieTerwilliger}} 
We have
\begin{equation}
A = L + F + R.
\label{eq:LFR}
\end{equation}
\end{lemma}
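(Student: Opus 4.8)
The plan is to expand $A$ via the resolution of the identity and then collapse the resulting double sum using the combinatorial vanishing condition \eqref{combcoeff}. First I would write $A = IAI$ and substitute $I = \sum_{i=0}^D E_i^*$ from \eqref{eq:Isum} into both copies of $I$, obtaining the double sum $A = \sum_{i=0}^D \sum_{j=0}^D E_i^* A E_j^*$.

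Next I would invoke \eqref{combcoeff}, which asserts that $E_i^* A E_j^* = 0$ whenever $|i-j| > 1$. This annihilates every term of the double sum except those lying on the three central block-diagonals $j = i$, $j = i+1$, and $j = i-1$. Grouping the surviving terms by these three cases, I would match each group against Definition \ref{def:LFR}: the main diagonal $j = i$ yields $F$ at once; the super-diagonal $j = i+1$ yields $L$, and the sub-diagonal $j = i-1$ yields $R$, in each case after relabeling the summation index so as to align with the form in Definition \ref{def:LFR}.

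The only genuine bookkeeping—and the closest thing to an obstacle—is reconciling the index ranges of the two off-diagonal sums with the ranges declared in Definition \ref{def:LFR} (namely $i$ from $1$ to $D$ for $L$, and $i$ from $0$ to $D-1$ for $R$). This is dispatched cleanly by the convention \eqref{eq:Eiis0} that $E_i^* = 0$ outside $0 \le i \le D$: the putative out-of-range boundary terms vanish identically, so no separate boundary analysis is required, and the sums extend or contract harmlessly. Adding the three groups then gives $A = L + F + R$, as claimed.
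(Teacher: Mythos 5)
Your proposal is correct and follows essentially the same route as the paper: expand $A = IAI$ using \eqref{eq:Isum}, collapse the double sum via \eqref{combcoeff}, and identify the three surviving block-diagonals with $L$, $F$, $R$ per Definition \ref{def:LFR}. Your identification of the super-diagonal $j=i+1$ with $L$ and the sub-diagonal $j=i-1$ with $R$ is correct, and your use of the convention \eqref{eq:Eiis0} to handle the boundary terms is exactly the intended bookkeeping.
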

\begin{proof}
Multiply $A$ on the left and  right by $I$. Evaluate the result
using \eqref{eq:Isum} and \eqref{combcoeff}. Line
(\ref{eq:LFR})
follows
by Definition \ref{def:LFR}. 
\end{proof}

\begin{note}
In \cite{HoraObata} the matrices $L,F,R$ are called
$A^-, A^0, A^+$ respectively.
In \cite[Definition 2.24]{HoraObata}
the  equation
(\ref{eq:LFR}) 
is called the {\de quantum decomposition of $A$ with respect to $x$}. 
\end{note}

\begin{definition}
\label{def:Q}
\rm \cite[p. 78]{HoraObata}.
Let $Q=Q(x)$ denote the subalgebra of $T$ generated by 
$L,F,R$. We call $Q$ the {\de quantum adjacency algebra of $\G$
with respect to $x$}. 
\end{definition}

\begin{note}
The algebra $Q$ is called 
$\tilde{\cal A}$ in
\cite{HoraObata}.
\end{note}

\begin{lemma} 
\label{lem:Qct}
The algebra $Q$ is closed under the
conjugate-transpose map.
\end{lemma}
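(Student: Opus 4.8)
The plan is to exploit the fact that the conjugate-transpose map is a conjugate-linear anti-automorphism of $\Mat_X(\C)$, together with the relations recorded in Lemma \ref{lem:LRFt}. Write $\dagger$ for the conjugate-transpose map, so that $M^\dagger = \overline{M}^t$ for $M \in \Mat_X(\C)$. First I would note the two structural properties $(MN)^\dagger = N^\dagger M^\dagger$ and $(cM+N)^\dagger = \overline{c}\,M^\dagger + N^\dagger$, valid for all $M,N \in \Mat_X(\C)$ and $c \in \C$. Everything reduces to understanding how $\dagger$ acts on the generators $L,F,R$.

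So the next step is to record the action of $\dagger$ on these generators. By Lemma \ref{lem:LRFt} the matrices $L,F,R$ are real, hence $M^\dagger = M^t$ for each of them; moreover $L^t = R$, so $R^t = (L^t)^t = L$, and $F^t = F$. Thus $L^\dagger = R$, $R^\dagger = L$, and $F^\dagger = F$. The decisive observation is that the generating set $\{L,F,R\}$ is carried onto itself by $\dagger$.

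With this in hand I would show $Q^\dagger \subseteq Q$. By Definition \ref{def:Q} the algebra $Q$ is spanned over $\C$ by the finite products (words) in the letters $L,F,R$. For such a word $w = X_1 X_2 \cdots X_k$ with each $X_j \in \{L,F,R\}$, the anti-automorphism property gives $w^\dagger = X_k^\dagger \cdots X_1^\dagger$, and by the previous step each $X_j^\dagger$ again lies in $\{L,F,R\}$; hence $w^\dagger$ is once more a word in $L,F,R$, so $w^\dagger \in Q$. For a general element $s = \sum_i c_i w_i \in Q$ with $c_i \in \C$ and $w_i$ words, conjugate-linearity yields $s^\dagger = \sum_i \overline{c_i}\, w_i^\dagger \in Q$. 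Therefore $Q^\dagger \subseteq Q$, and applying $\dagger$ once more gives the reverse inclusion, so $Q$ is closed under the conjugate-transpose map.

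I do not expect a genuine obstacle here; the only point demanding care is that $\dagger$ \emph{reverses} the order of products, so one must check that the generating set is stable as a set rather than merely that each individual generator maps into the algebra. This stability is precisely what Lemma \ref{lem:LRFt} supplies, via the pairing $L^\dagger = R$, $R^\dagger = L$, $F^\dagger = F$.
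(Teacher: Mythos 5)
Your proof is correct and is exactly the argument the paper intends: the paper's own proof is the one-line citation ``By Lemma \ref{lem:LRFt} and Definition \ref{def:Q},'' and your write-up simply makes explicit that $L^\dagger = R$, $R^\dagger = L$, $F^\dagger = F$ stabilize the generating set, so the anti-automorphism property carries words and $\C$-linear combinations back into $Q$. Your emphasis on set-stability of the generators (rather than mere membership of each $X^\dagger$ in $Q$) is precisely the right point of care, and nothing further is needed.
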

\begin{proof}
By Lemma
\ref{lem:LRFt}
and
Definition \ref{def:Q}.
\end{proof}

\begin{lemma} 
\label{lem:dimQ}
We have
\begin{equation}                                  \label{eq:dimQ}
\dim \, Q = \sum_{\mu \in Q^\vee} d_{\mu}^2.
\end{equation}
\end{lemma}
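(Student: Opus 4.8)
The plan is to recognize Lemma \ref{lem:dimQ} as a direct specialization of the general dimension formula in Proposition \ref{thm:algebradim}, applied to the subalgebra $S=Q$. Before doing so, I would first note that the right-hand side of \eqref{eq:dimQ} is only meaningful once the machinery of Section \ref{sec:prelim} is known to apply to $Q$: the set of types $Q^\vee$, the decomposition \eqref{eq:decompose}, and the type-dimension $d_\mu$ are all defined under the standing assumption of that section, namely that the algebra in question is a subalgebra of $\Mat_X(\C)$ that is closed under the conjugate-transpose map (which in turn guarantees semisimplicity). So the real content is to check that $Q$ meets this hypothesis.

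The key step is to invoke Lemma \ref{lem:Qct}, which states precisely that $Q$ is closed under the conjugate-transpose map; together with Definition \ref{def:Q}, which exhibits $Q$ as a subalgebra of $T\subseteq \Mat_X(\C)$, this shows that $Q$ satisfies the hypotheses of Proposition \ref{thm:algebradim} with $S=Q$. At that point the entire discussion of Section \ref{sec:prelim} (semisimplicity, orthogonal decomposition into irreducibles, the well-definedness of $Q^\vee$ and $d_\mu$) is available for $Q$, exactly as it was arranged for $T$ around \eqref{eq:dimT1}.

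Finally I would apply Proposition \ref{thm:algebradim} verbatim to conclude
$$
\dim\, Q \;=\; \sum_{\mu \in Q^\vee} d_\mu^2,
$$
which is \eqref{eq:dimQ}. I do not anticipate any genuine obstacle here: the lemma is a routine instantiation, and the only thing to be careful about is citing Lemma \ref{lem:Qct} to license the use of Proposition \ref{thm:algebradim}, rather than re-deriving semisimplicity of $Q$ from scratch. The proof is therefore a one-line appeal to the earlier proposition once the conjugate-transpose-closure of $Q$ has been recorded.
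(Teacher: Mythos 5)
Your proposal is correct and follows the paper's own route: the paper proves Lemma \ref{lem:Qct} (conjugate-transpose closure of $Q$) immediately beforehand and then derives \eqref{eq:dimQ} as a one-line application of Proposition \ref{thm:algebradim}. Your version merely makes explicit the verification of the hypothesis that the paper leaves implicit, which is fine.
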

\begin{proof}
By Proposition \ref{thm:algebradim}.
\end{proof}


As we will see, in some cases $Q=T$,
and in other cases  $Q \ne T$.
We now consider how $Q$ is related to $T$ in general.

\begin{lemma}                  \label{lemma:pushEi}
We have
\begin{eqnarray*}
&&LE_i^* = E_{i-1}^*L \qquad (1 \leq i \leq D), 
\qquad LE^*_0 = 0, \qquad E^*_D L = 0, \\
&&
FE_i^* = E_i^*F \qquad (0 \leq i \leq D),
\\
&&
RE^*_{i-1} = E^*_i R \qquad (1 \leq i \leq D),
\qquad 
R E^*_D = 0 ,\qquad 
E^*_0 R = 0.
\end{eqnarray*}
\end{lemma}
\begin{proof}
By
\eqref{eq:EiEj}
and
Definition \ref{def:LFR}.
\end{proof}


\begin{corollary}                            \label{cor:LMML}
We have
$$
LM^*=M^*L, \qquad \qquad
FM^*=M^*F, \qquad \qquad
RM^*=M^*R.
$$
\end{corollary}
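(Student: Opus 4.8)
The plan is to prove Corollary~\ref{cor:LMML} directly from Lemma~\ref{lemma:pushEi}, since $M^*$ has the explicit basis $\{E_i^*\}_{i=0}^D$ and the lemma already tells us exactly how $L,F,R$ commute with each $E_i^*$. Recall the notation convention that $YZ=\Span\{yz \mid y\in Y,\ z\in Z\}$, so to prove $LM^*=M^*L$ it suffices to show that the two spanning sets generate the same subspace of $\Mat_X(\C)$. Because $\{E_i^*\}_{i=0}^D$ is a basis for $M^*$, the space $LM^*$ is spanned by the products $LE_i^*$ for $0\le i\le D$, and $M^*L$ is spanned by the products $E_i^*L$ for $0\le i\le D$. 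So the entire task reduces to comparing these two finite collections of matrices.

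First I would handle $L$. By Lemma~\ref{lemma:pushEi} we have $LE_i^*=E_{i-1}^*L$ for $1\le i\le D$ and $LE_0^*=0$. Hence every spanning element $LE_i^*$ of $LM^*$ equals $E_{i-1}^*L$, which lies in $M^*L$; this gives $LM^*\subseteq M^*L$. For the reverse containment, note the lemma also gives $E_D^*L=0$, and for $0\le i\le D-1$ the relation $LE_{i+1}^*=E_i^*L$ shows that each spanning element $E_i^*L$ of $M^*L$ equals $LE_{i+1}^*\in LM^*$; thus $M^*L\subseteq LM^*$, and the two spaces coincide. The argument for $R$ is entirely parallel, using $RE_{i-1}^*=E_i^*R$ for $1\le i\le D$ together with the boundary relations $RE_D^*=0$ and $E_0^*R=0$ to match up the two spanning sets in both directions.

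The case of $F$ is the simplest: Lemma~\ref{lemma:pushEi} gives $FE_i^*=E_i^*F$ for all $0\le i\le D$ with no boundary exceptions, so the spanning sets $\{FE_i^*\}$ and $\{E_i^*F\}$ are literally identical term by term, and therefore $FM^*=M^*F$ immediately. There is no real obstacle in this proof; the only point requiring a little care is the bookkeeping at the boundary indices $i=0$ and $i=D$ for $L$ and $R$. Here the index shift means the vanishing relations ($LE_0^*=0$, $E_D^*L=0$, and their analogues for $R$) are exactly what is needed so that the omitted boundary terms on each side are both zero, and no spanning element on either side is left unmatched. Once this alignment is verified, all three equalities follow, completing the proof.
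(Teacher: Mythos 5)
Your proof is correct and follows exactly the paper's route: the paper's own proof of Corollary~\ref{cor:LMML} is simply ``By Lemma~\ref{lemma:pushEi} and since $M^*$ is spanned by $\{E_i^*\}_{i=0}^D$,'' and your argument is a careful, accurate expansion of that one-liner, with the boundary cases ($LE_0^*=0$, $E_D^*L=0$, $RE_D^*=0$, $E_0^*R=0$) handled correctly in both containments.
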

\begin{proof}
By Lemma \ref{lemma:pushEi} and since $M^*$ is spanned by $\{E_i^*\}_{i=0}^D$.
\end{proof}

\begin{proposition}                                \label{thm:QM}
We have
$$
QM^*=T=M^*Q.
$$
\end{proposition}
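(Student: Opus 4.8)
The plan is to establish the two equalities separately, first proving $QM^* = M^*Q$ and then showing that this common subspace coincides with $T$. The whole argument rests on Corollary \ref{cor:LMML}, which says that each of $L,F,R$ commutes with $M^*$ as a subspace of $\Mat_X(\C)$; equivalently, by Lemma \ref{lemma:pushEi}, each generator $g \in \{L,F,R\}$ can be pushed past a dual idempotent according to $g E_i^* = E_j^* g$ for the appropriate shift $j \in \{i-1,i,i+1\}$ (or $gE_i^*=0$). The point is that $Q$ normalizes $M^*$.

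First I would show $QM^* = M^*Q$. Since $Q$ is spanned by words $g_1 g_2 \cdots g_n$ in the generators $L,F,R$, it suffices to prove $(g_1 \cdots g_n) m^* \in M^*Q$ and $m^*(g_1 \cdots g_n) \in QM^*$ for every such word and every $m^* \in M^*$. Writing $m^* = \sum_i c_i E_i^*$ and applying $g E_i^* = E_j^* g$ from Lemma \ref{lemma:pushEi}, one gets $g m^* = \tilde m^* g$ with $\tilde m^* = \sum_i c_i E_{j(i)}^* \in M^*$; iterating this pushes $m^*$ leftward across the entire word by an easy induction on $n$, yielding $(g_1 \cdots g_n) m^* = m^{**}(g_1 \cdots g_n)$ for some $m^{**} \in M^*$, hence $QM^* \subseteq M^*Q$. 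The reverse containment $M^*Q \subseteq QM^*$ follows symmetrically by pushing $m^*$ rightward across the word. This gives $QM^* = M^*Q$.

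Next I would argue that the common subspace $QM^*$ is a subalgebra of $\Mat_X(\C)$. A typical product $(q_1 m_1^*)(q_2 m_2^*)$ rewrites as $q_1 (m_1^* q_2) m_2^*$, and using the identity $M^*Q = QM^*$ just established to replace $m_1^* q_2$ by an element of $QM^*$, the product collapses into $(QQ)(M^*M^*) = QM^*$, since $Q$ and $M^*$ are themselves algebras. Because $I$ lies in both $Q$ and $M^*$, the subspace $QM^*$ contains $I$, so it is indeed a subalgebra.

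Finally I would identify $QM^*$ with $T$. The inclusion $QM^* \subseteq T$ is immediate since $Q \subseteq T$, $M^* \subseteq T$, and $T$ is an algebra. For the reverse inclusion, note that $QM^*$ contains every generator of $T$: each $E_i^* \in M^* = IM^* \subseteq QM^*$, and $A = L+F+R \in Q = QI \subseteq QM^*$ by Lemma \ref{lemma:AisLFR}. Since $T$ is generated by $A$ and $\{E_i^*\}_{i=0}^D$ and $QM^*$ is a subalgebra containing these, we get $T \subseteq QM^*$, whence $QM^* = T$, and then $M^*Q = T$ as well. I expect the only genuine obstacle to be the normalization $QM^* = M^*Q$, and more precisely setting up the induction that pushes a dual idempotent cleanly across an arbitrary word in $L,F,R$; once that commutation is in hand, the remaining steps are formal manipulations with subspace products.
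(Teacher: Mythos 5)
Your proof is correct and follows essentially the same route as the paper's: derive $QM^*=M^*Q$ from the commutation relations of Corollary \ref{cor:LMML} (your word-by-word induction, with the edge cases $LE_0^*=0$, $E_D^*L=0$, etc.\ from Lemma \ref{lemma:pushEi}, just makes this explicit), observe that the common value is a subalgebra of $T$ containing both $Q$ and $M^*$, and conclude since $T$ is generated by $M^*$ and $Q$. The details you supply are exactly what the paper's terse proof leaves implicit.
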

\begin{proof}
By Corollary \ref{cor:LMML} and since $Q$ is generated
by $L,F,R$, we see that $QM^*=M^*Q$.
This common value is a subalgebra of
$T$ that contains
$M^*$ and $Q$. The algebra $T$ is generated by 
$M^*,Q$. The result follows.
\end{proof}

\section{A $\Z$-grading for $Q$ and $T$}
\label{section:Zgrading}

In this section we describe a  $\Z$-grading for $Q$ and $T$.
These $\Z$-gradings will be used to compare
the $Q$-modules and $T$-modules.

\begin{lemma}                        \label{lemma:Tsum0} 
The following is a direct sum of vector spaces:
\begin{equation}
\label{eq:Texpand}
     T = \sum_{i=0}^D \sum_{j=0}^D E^*_i T E^*_j.
\end{equation}
\end{lemma}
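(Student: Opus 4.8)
The plan is to verify two things: that the right-hand side of \eqref{eq:Texpand} equals $T$, and that the sum is direct. Both are formal consequences of the fact that the $E_i^*$ sum to $I$ and form a system of orthogonal idempotents.

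First I would check that the sum spans $T$. Since each $E_i^* T E_j^*$ is contained in $T$, the right-hand side of \eqref{eq:Texpand} is contained in $T$. For the reverse inclusion, take any $t \in T$ and write $t = ItI$. Expanding each copy of $I$ using \eqref{eq:Isum} gives
\[
t = \Bigl(\sum_{i=0}^D E_i^*\Bigr) t \Bigl(\sum_{j=0}^D E_j^*\Bigr) = \sum_{i=0}^D \sum_{j=0}^D E_i^* t E_j^*.
\]
Each summand lies in $E_i^* T E_j^*$, so $t$ belongs to the right-hand side of \eqref{eq:Texpand}. Hence the right-hand side equals $T$.

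Next I would establish directness. The crucial observation is that any element $B \in E_i^* T E_j^*$ satisfies $E_k^* B E_l^* = \delta_{ki}\delta_{lj} B$ for all indices $k,l$; indeed $B = E_i^* C E_j^*$ for some $C \in T$, and the claim then follows from the idempotent relations \eqref{eq:EiEj}. Now suppose $\sum_{i,j} B_{ij} = 0$ with $B_{ij} \in E_i^* T E_j^*$. Fixing a pair $(k,l)$ and multiplying this relation on the left by $E_k^*$ and on the right by $E_l^*$, the observation collapses the sum to the single term $B_{kl}$, yielding $B_{kl} = 0$. Since $(k,l)$ was arbitrary, the sum in \eqref{eq:Texpand} is direct.

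There is essentially no obstacle here. The only point requiring care is to record that $E_i^* T E_j^*$ consists precisely of those matrices fixed by the two-sided projection $B \mapsto E_i^* B E_j^*$ and annihilated by all the other such projections; this is exactly what makes the blocks $E_i^* T E_j^*$ pairwise disjoint and the decomposition direct.
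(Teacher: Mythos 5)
Your proof is correct and follows the same route as the paper: spanning via $t = ItI$ and \eqref{eq:Isum}, and directness via the orthogonal idempotent relations \eqref{eq:EiEj}, which is exactly what the paper's (much terser) proof invokes. Your version simply makes explicit the projection argument that the paper leaves to the reader.
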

\begin{proof}
Multiply $T$ on the left and  right by $I$, and use
\eqref{eq:Isum} to obtain \eqref{eq:Texpand}.
The sum \eqref{eq:Texpand} is direct by \eqref{eq:Isum} and \eqref{eq:EiEj}.
\end{proof}
\begin{definition}                                \label{def:defTn}
For  $n \in \Z$ define a subspace $T_n \subseteq T$ by 
$$                                                
T_n= \sum_{i \in \Z} E_{i+n}^* T E_{i}^*.
$$
\end{definition}

\begin{lemma}                        \label{lemma:Tsum} 
The following is a direct sum of vector spaces:
\begin{equation}                   \label{eq:gradeT}
T = \sum_{n\in \Z} T_n.
\end{equation}
\end{lemma}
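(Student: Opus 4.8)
The plan is to deduce this directly from Lemma \ref{lemma:Tsum0} by regrouping the summands of the direct sum \eqref{eq:Texpand} according to the difference of their row/column indices. The key observation is that each summand $E^*_{i+n}TE^*_i$ occurring in Definition \ref{def:defTn} is, modulo the convention \eqref{eq:Eiis0}, exactly one of the summands $E^*_aTE^*_b$ appearing in \eqref{eq:Texpand}. Indeed, $E^*_{i+n}TE^*_i$ is nonzero only when $0\le i\le D$ and $0\le i+n\le D$, in which case it coincides with the $(a,b)=(i+n,i)$ term of \eqref{eq:Texpand}. Conversely, each term $E^*_aTE^*_b$ of \eqref{eq:Texpand} belongs to exactly one $T_n$, namely the one with $n=a-b$. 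Thus the map $(a,b)\mapsto a-b$ partitions the index set $\{(a,b)\mid 0\le a,b\le D\}$ into fibers, and $T_n$ is precisely the sum of those summands $E^*_aTE^*_b$ whose index lies in the fiber over $n$.

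To establish $T=\sum_{n\in\Z}T_n$, I would note that summing the $T_n$ over all $n$ sweeps out every summand of \eqref{eq:Texpand}, so $\sum_{n\in\Z}T_n$ contains $T$ by Lemma \ref{lemma:Tsum0}; the reverse inclusion holds since each $E^*_{i+n}TE^*_i\subseteq T$, because $E^*_i\in T$ and $T$ is closed under multiplication. For directness I would invoke the general principle that regrouping a direct sum along a partition of its index set yields a direct sum: concretely, suppose $\sum_{n}t_n=0$ with $t_n\in T_n$. Writing each $t_n$ as the corresponding sum of components in the distinct summands $E^*_aTE^*_b$ (with $a-b=n$), and using that the fibers of $(a,b)\mapsto a-b$ are pairwise disjoint, the collection of all these components is a sum of elements lying in distinct summands of the direct sum \eqref{eq:Texpand}. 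Hence every component vanishes, and therefore each $t_n=0$.

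The argument is essentially bookkeeping, so I do not expect a genuine obstacle; the only point requiring care is the interplay with the convention \eqref{eq:Eiis0}, namely verifying that the formally infinite sum over $i\in\Z$ in Definition \ref{def:defTn} collapses to the finite range $\max(0,-n)\le i\le \min(D,D-n)$ and thereby matches exactly the finite index set of \eqref{eq:Texpand}, with no term counted twice and none omitted.
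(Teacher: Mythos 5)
Your proposal is correct and is essentially the paper's own argument: the paper proves this lemma in one line by combining Lemma \ref{lemma:Tsum0} with Definition \ref{def:defTn}, and your regrouping of the summands $E^*_aTE^*_b$ along the fibers of $(a,b)\mapsto a-b$ is precisely the bookkeeping that one-line proof leaves implicit. Your careful handling of the convention \eqref{eq:Eiis0}, which collapses the formally infinite sum in Definition \ref{def:defTn} to a finite one, is exactly the right point to check.
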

\begin{proof}
Combine Lemma \ref{lemma:Tsum0} and Definition \ref{def:defTn}.
\end{proof}

\begin{lemma}                        \label{lemma:Tn0} 
For $n \in \mathbb Z$ we  have $T_n =0$ unless $-D \leq n \leq D$.
\end{lemma}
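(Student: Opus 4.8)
The plan is to read the claim directly off the index convention \eqref{eq:Eiis0}, which stipulates that $E_i^* = 0$ unless $0 \le i \le D$. First I would fix $n \in \Z$ and examine a single summand $E_{i+n}^* T E_i^*$ from Definition \ref{def:defTn}. Such a summand can be nonzero only if both dual idempotents $E_{i+n}^*$ and $E_i^*$ are themselves nonzero, and by \eqref{eq:Eiis0} this forces $0 \le i \le D$ together with $0 \le i+n \le D$.

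From these two inequalities I would extract bounds on $n$: combining $i \ge 0$ with $i+n \le D$ yields $n \le D$, while combining $i \le D$ with $i+n \ge 0$ yields $n \ge -D$. Hence, whenever $n > D$ or $n < -D$, every summand $E_{i+n}^* T E_i^*$ in Definition \ref{def:defTn} vanishes, so $T_n = 0$. Since this index-range argument is the only ingredient, there is no genuine obstacle; the result is pure bookkeeping driven by \eqref{eq:Eiis0}. The one point to handle with a little care is that the sum in Definition \ref{def:defTn} ranges over all $i \in \Z$, so I would note explicitly that the out-of-range terms contribute nothing by \eqref{eq:Eiis0}, rather than silently restricting the summation index in advance.
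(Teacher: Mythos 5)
Your proof is correct and is exactly the argument the paper intends: its one-line proof (``By \eqref{eq:Eiis0} and Definition \ref{def:defTn}'') is precisely your observation that a summand $E_{i+n}^* T E_i^*$ vanishes unless $0 \le i \le D$ and $0 \le i+n \le D$, which forces $-D \le n \le D$. You have simply written out the bookkeeping that the paper leaves implicit, including the careful remark about the sum ranging over all $i \in \Z$.
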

\begin{proof}
By 
\eqref{eq:Eiis0}
and
Definition \ref{def:defTn}.
\end{proof}

\begin{lemma} 
\label{lem:TnAction}
For $n \in \Z$ and $S  \in T$ the following are equivalent:
\begin{itemize}
\item[{\rm (i)}]  $S \in T_n$;
\item[{\rm (ii)}]  $S E^*_i V \subseteq E^*_{i+n}V$ for $i \in \Z$.
\end{itemize}
\end{lemma}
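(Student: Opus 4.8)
The plan is to prove the two implications separately, in each case translating between the algebraic description of $T_n$ in Definition \ref{def:defTn} and the action on the subspaces $E_i^* V$. The orthogonal decomposition $V = \sum_{i=0}^D E_i^* V$ from \eqref{eq:Vsum}, together with the idempotent relations \eqref{eq:Isum} and \eqref{eq:EiEj}, will do essentially all the work.

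For (i) $\Rightarrow$ (ii), I would write $S \in T_n$ as a finite sum of terms of the form $E_{i+n}^* t\, E_i^*$ with $t \in T$. Fixing $j \in \Z$ and applying such a term to $E_j^* V$, the relation $E_i^* E_j^* = \delta_{ij} E_i^*$ from \eqref{eq:EiEj} forces the term to vanish unless $i = j$, in which case it sends $E_j^* V$ into $E_{j+n}^* V$. Summing over the terms then yields $S E_j^* V \subseteq E_{j+n}^* V$, which is exactly (ii).

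For the reverse implication (ii) $\Rightarrow$ (i), I would start from the decomposition $S = \sum_{i=0}^D \sum_{j=0}^D E_i^* S E_j^*$ obtained by multiplying $S$ on both sides by $I = \sum_i E_i^*$, as in Lemma \ref{lemma:Tsum0}. The key step is to show that every block with $i \ne j+n$ is zero. Fixing $i,j$ and an arbitrary $v \in V$, I note that $E_j^* v \in E_j^* V$, so hypothesis (ii) gives $S E_j^* v \in E_{j+n}^* V$, whence $E_i^* S E_j^* v \in E_i^* E_{j+n}^* V$; by \eqref{eq:EiEj} this space is zero unless $i = j+n$. Since $v$ is arbitrary, the block $E_i^* S E_j^*$ annihilates all of $V$. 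Collecting the surviving blocks gives $S = \sum_{j} E_{j+n}^* S E_j^*$, and since each summand lies in $E_{j+n}^* T E_j^*$, Definition \ref{def:defTn} yields $S \in T_n$.

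The only genuinely delicate point lies in the reverse direction: the inclusion of images tells me only that the matrix $E_i^* S E_j^*$ kills every vector, and I must then invoke that $V = \C^X$ is the full standard module to conclude that this matrix is literally the zero matrix. Once that observation is in place, the remainder is routine bookkeeping with the mutually orthogonal idempotents $\{E_i^*\}_{i=0}^D$.
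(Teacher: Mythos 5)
Your proof is correct and takes essentially the same route as the paper: the forward direction expands Definition \ref{def:defTn} against the idempotent relations \eqref{eq:EiEj}, and your reverse direction via the block decomposition $S=\sum_{i,j}E_i^*SE_j^*$ of Lemma \ref{lemma:Tsum0}, with the blocks $E_i^*SE_j^*$ for $i\neq j+n$ annihilated using \eqref{eq:EiEj}, is precisely the content behind the paper's citations of Lemma \ref{lemma:Tsum} and \eqref{eq:EiEj}, \eqref{eq:Vsum}. The ``delicate point'' you flag --- that a matrix killing all of $V=\C^X$ is the zero matrix --- is exactly the role \eqref{eq:Vsum} plays in the paper's sketch, so nothing is missing.
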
 
\begin{proof} 
(i) $\Rightarrow$ (ii) By Definition \ref{def:defTn} and \eqref{eq:EiEj}.\\
(ii) $\Rightarrow$ (i) By Lemma   \ref{lemma:Tsum} and \eqref{eq:EiEj},
 \eqref{eq:Vsum}.
\end{proof}

\begin{lemma}              \label{lemma:Tncontains}
We have $M^* \subseteq T_0$.
 Moreover
\begin{equation}
L \in T_{-1}, \qquad \qquad
F \in T_0,  \qquad \qquad
R \in T_{1}.
\label{eq:WhereLFR}
\end{equation}
\end{lemma}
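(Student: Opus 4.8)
The plan is to prove two assertions: first that $M^* \subseteq T_0$, and second the three containments \eqref{eq:WhereLFR}. Both follow directly from the characterization of $T_n$ given in Lemma \ref{lem:TnAction}, which reduces each claim to checking how the relevant matrix moves the subspaces $E_i^*V$.

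First I would handle $M^* \subseteq T_0$. Since $M^*$ is spanned by $\{E_i^*\}_{i=0}^D$, it suffices by linearity to show $E_j^* \in T_0$ for each $j$. By Lemma \ref{lem:TnAction}, this amounts to verifying that $E_j^* E_i^* V \subseteq E_{i+0}^* V = E_i^* V$ for all $i \in \Z$. This is immediate from \eqref{eq:EiEj}: we have $E_j^* E_i^* = \delta_{ij} E_i^*$, so $E_j^* E_i^* V$ is either $E_i^* V$ (when $j=i$) or $0$ (when $j \ne i$), and in both cases it lies in $E_i^* V$. Hence $E_j^* \in T_0$ for all $j$, and since $T_0$ is a subspace, $M^* \subseteq T_0$.

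Next I would establish \eqref{eq:WhereLFR}. The key input is Lemma \ref{lemma:pushEi} together with Lemma \ref{lem:TnAction}. For the claim $R \in T_1$, I use the third line of Lemma \ref{lemma:pushEi}, which gives $R E_{i-1}^* = E_i^* R$ for $1 \le i \le D$ and $R E_D^* = 0$. Reindexing, this says $R E_i^* = E_{i+1}^* R$ for all $i \in \Z$ (the boundary and vanishing cases being absorbed by the convention \eqref{eq:Eiis0}), whence $R E_i^* V = E_{i+1}^* R V \subseteq E_{i+1}^* V$. By Lemma \ref{lem:TnAction}(ii)$\Rightarrow$(i) this gives $R \in T_1$. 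The arguments for $L \in T_{-1}$ and $F \in T_0$ are entirely parallel, using the first and second lines of Lemma \ref{lemma:pushEi} respectively: from $L E_i^* = E_{i-1}^* L$ we get $L E_i^* V \subseteq E_{i-1}^* V$, and from $F E_i^* = E_i^* F$ we get $F E_i^* V \subseteq E_i^* V$.

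I do not anticipate any genuine obstacle here, since every step reduces to a one-line application of an already-established lemma; the only point requiring mild care is the bookkeeping with the index conventions \eqref{eq:Eiis0} at the boundaries $i=0$ and $i=D$, ensuring that the stated containments hold for \emph{all} $i \in \Z$ rather than just the interior range, so that Lemma \ref{lem:TnAction} applies cleanly. This is exactly why Lemma \ref{lemma:pushEi} was stated with its explicit boundary conditions $L E_0^* = 0$, $E_D^* L = 0$, $R E_D^* = 0$, and $E_0^* R = 0$.
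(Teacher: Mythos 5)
Your proof is correct and follows essentially the same route as the paper: the paper obtains $M^*\subseteq T_0$ from \eqref{eq:EiEj} and Definition \ref{def:defTn}, and deduces \eqref{eq:WhereLFR} from Lemmas \ref{lem:LFRsubspace} and \ref{lem:TnAction}, whereas you route everything through the characterization in Lemma \ref{lem:TnAction}, deriving the needed containments from the commutation relations of Lemma \ref{lemma:pushEi} instead of citing Lemma \ref{lem:LFRsubspace} directly --- an interchangeable substitution, since both lemmas follow from the same one-line computation with \eqref{eq:EiEj}. Your care with the boundary cases via \eqref{eq:Eiis0} is exactly right (and would even let you apply Lemma \ref{lemma:TnEi} directly), so there is nothing to fix.
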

\begin{proof}
By
\eqref{eq:EiEj}
and
Definition \ref{def:defTn},
$T_0$ contains $E_j^*$ for $0 \le j \le D$.
Therefore $T_0$ contains $M^*$.
Line (\ref{eq:WhereLFR}) 
follows from  Lemmas \ref{lem:LFRsubspace}
and \ref{lem:TnAction}.
\end{proof}

\begin{lemma}      \label{lemma:TnEi} 
For $n \in \Z$ and $S \in T$ the following are equivalent:
\begin{itemize}
\item[{\rm (i)}]  $S \in T_n$;
\item[{\rm (ii)}]  $SE^*_i = E^*_{i+n} S$ for $i \in \Z$.
\end{itemize}
Suppose {\rm (i)}, {\rm (ii)} hold. Then $SE^*_i =  E^*_{i+n}S E^*_i = E^*_{i+n}S $
for $i \in \Z$.
\end{lemma}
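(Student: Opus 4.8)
The plan is to prove the equivalence (i) $\Leftrightarrow$ (ii) directly from the $\Z$-grading, and to read off the final three-way equality as a byproduct of the forward implication. The one structural fact I would lean on throughout is that, by \eqref{eq:Isum}, every $S \in T$ can be written as $S = ISI = \sum_{i,j} E_i^* S E_j^*$, and that each summand $E_i^* S E_j^*$ lies in $T_{i-j}$ by Definition \ref{def:defTn}.

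For (i) $\Rightarrow$ (ii), I would start from $S \in T_n$ and invoke the directness of $T = \sum_n T_n$ from Lemma \ref{lemma:Tsum}: comparing the decomposition $S = \sum_{i,j} E_i^* S E_j^*$ with the fact that $S$ is homogeneous of grade $n$ forces $E_i^* S E_j^* = 0$ unless $i - j = n$, so $S = \sum_j E_{j+n}^* S E_j^*$. Multiplying this on the right by $E_i^*$ and on the left by $E_{i+n}^*$ and simplifying with \eqref{eq:EiEj} then yields $S E_i^* = E_{i+n}^* S E_i^* = E_{i+n}^* S$ in one stroke, which gives both (ii) and the concluding chain of equalities.

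For (ii) $\Rightarrow$ (i), I would take the relation $S E_j^* = E_{j+n}^* S$ and multiply on the left by $E_i^*$; by \eqref{eq:EiEj} the left side becomes $E_i^* S E_j^*$ while the right side becomes $\delta_{i,\,j+n} E_i^* S$. Hence the component $E_i^* S E_j^*$ vanishes unless $i = j + n$, and substituting into $S = \sum_{i,j} E_i^* S E_j^*$ gives $S = \sum_j E_{j+n}^* S E_j^* \in \sum_j E_{j+n}^* T E_j^* = T_n$ by Definition \ref{def:defTn}.

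There is no serious obstacle here; the statement is a routine bookkeeping fact about the grading. The only points requiring mild care are the correct use of directness in Lemma \ref{lemma:Tsum} for the forward direction, and keeping all index sums finite and coherent via \eqref{eq:Eiis0}, which guarantees $E_i^* = 0$ for $i$ outside $[0,D]$. As an alternative route to the forward implication, one could instead derive (ii) from the subspace containment in Lemma \ref{lem:TnAction}: since $E_{i+n}^*$ is an orthogonal projection, the inclusion $S E_i^* V \subseteq E_{i+n}^* V$ is equivalent to $S E_i^* = E_{i+n}^* S E_i^*$, and summing over $i$ with \eqref{eq:Isum} recovers $S E_j^* = E_{j+n}^* S$.
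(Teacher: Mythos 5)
Your proof is correct and takes essentially the same route as the paper, whose proof simply cites Definition \ref{def:defTn}, the relations \eqref{eq:EiEj}, and (for the converse direction) the decomposition of Lemma \ref{lemma:Tsum0} --- precisely the ingredients you spell out in detail. The only cosmetic difference is your appeal to directness of the grading (Lemma \ref{lemma:Tsum}) in the forward direction, which is dispensable: $S \in T_n$ already gives $S = \sum_j E_{j+n}^* S E_j^*$ upon multiplying the expression from Definition \ref{def:defTn} by dual idempotents and using \eqref{eq:EiEj}.
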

\begin{proof}
(i) $\Rightarrow$ (ii) By Definition \ref{def:defTn} and \eqref{eq:EiEj}.\\
(ii) $\Rightarrow$ (i) By Lemma   \ref{lemma:Tsum0},
\eqref{eq:EiEj}, and Definition \ref{def:defTn}.\\
Suppose {\rm (i)}, {\rm (ii)} hold. 
Then the last assertion holds by Definition \ref{def:defTn} and \eqref{eq:EiEj}.
\end{proof}

\begin{lemma}        \label{lemma:TnTm}
We have 
\begin{equation}                      \label{eq:TnTm}
T_nT_m \subseteq T_{n+m} \qquad \quad (n,m \in \Z).
\end{equation}
\end{lemma}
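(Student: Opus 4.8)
The statement to prove is the graded-multiplication law $T_nT_m \subseteq T_{n+m}$ for $n,m \in \Z$. The cleanest route is to use the operational characterization of membership in $T_n$ given in Lemma~\ref{lem:TnAction}, rather than to unfold the defining sum in Definition~\ref{def:defTn}. So first I would take arbitrary elements $S \in T_n$ and $S' \in T_m$, and aim to show that their product $SS'$ satisfies condition (ii) of Lemma~\ref{lem:TnAction} with the index $n+m$, namely $SS' E_i^* V \subseteq E_{i+n+m}^* V$ for all $i \in \Z$.

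The key step is a two-stage inclusion that chains the actions of $S'$ and $S$. Since $S' \in T_m$, Lemma~\ref{lem:TnAction} gives $S' E_i^* V \subseteq E_{i+m}^* V$. Applying $S$ to both sides and using $S \in T_n$ together with the same lemma (now applied at index $i+m$), I get
\begin{equation*}
SS' E_i^* V \subseteq S E_{i+m}^* V \subseteq E_{i+m+n}^* V = E_{i+(n+m)}^* V.
\end{equation*}
This holds for every $i \in \Z$, so $SS'$ meets criterion (ii) of Lemma~\ref{lem:TnAction} for the grade $n+m$, whence $SS' \in T_{n+m}$. Because $T_{n+m}$ is a subspace and the products $SS'$ with $S \in T_n$, $S' \in T_m$ span $T_nT_m$ (in the sense of the Notation for the product $YZ$ of subspaces), the desired inclusion $T_nT_m \subseteq T_{n+m}$ follows.

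One point to verify before invoking Lemma~\ref{lem:TnAction} is that $SS'$ indeed lies in $T$, so that the lemma applies to it; this is immediate since $T$ is an algebra and $S,S' \in T$. There is essentially no obstacle here: the only mild subtlety is being careful that the grading indices are taken over all of $\Z$ (with the convention \eqref{eq:Eiis0} that $E_k^*=0$ outside $0 \le k \le D$), so that the inclusions remain valid even when $i$, $i+m$, or $i+n+m$ falls outside the range $[0,D]$—in those degenerate cases one or more of the subspaces is zero and the inclusions hold trivially. Alternatively, one could prove the result directly from Definition~\ref{def:defTn} by expanding $E_{i+n}^* T E_i^* \cdot E_{j+m}^* T E_j^*$ and collapsing the middle factor $E_i^* E_{j+m}^*$ via \eqref{eq:EiEj}, which forces $i = j+m$ and lands the product in $E_{j+n+m}^* T E_j^* \subseteq T_{n+m}$; but the module-action argument via Lemma~\ref{lem:TnAction} is shorter and less computational, so that is the approach I would present.
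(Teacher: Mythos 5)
Your argument is correct and complete: taking $S \in T_n$, $S' \in T_m$, chaining $SS'E_i^*V \subseteq SE_{i+m}^*V \subseteq E_{i+(n+m)}^*V$ for all $i \in \Z$, invoking the implication (ii) $\Rightarrow$ (i) of Lemma \ref{lem:TnAction} to conclude $SS' \in T_{n+m}$, and then passing to spans is airtight, and you rightly note both that $SS' \in T$ (so the lemma applies) and that the convention \eqref{eq:Eiis0} handles out-of-range indices. However, your main route differs from the paper's: the paper proves Lemma \ref{lemma:TnTm} by exactly the computation you relegate to your closing remark, namely expanding $T_nT_m$ via Definition \ref{def:defTn} as sums of terms $E_{i+n}^*TE_i^* \cdot E_{j+m}^*TE_j^*$ and collapsing the middle factor $E_i^*E_{j+m}^* = \delta_{i,j+m}E_i^*$ by \eqref{eq:EiEj}, which forces $i = j+m$ and lands each surviving term in $E_{j+n+m}^*TE_j^* \subseteq T_{n+m}$. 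Your approach through Lemma \ref{lem:TnAction} is legitimate (that lemma precedes Lemma \ref{lemma:TnTm} in the paper, so there is no circularity) and is arguably cleaner, since it transfers the grading question to the action on $V$ and avoids index bookkeeping; what it costs is self-containedness, because the direction (ii) $\Rightarrow$ (i) of Lemma \ref{lem:TnAction} itself rests on the direct-sum decompositions \eqref{eq:gradeT} and \eqref{eq:Vsum}, whereas the paper's one-line expansion uses only the definition and the idempotent relations. Either proof is acceptable; yours trades a small amount of machinery for conceptual transparency.
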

\begin{proof}
Use Definition
  \ref{def:defTn} and \eqref{eq:EiEj}.
\end{proof}

\begin{note}
By Lemmas \ref{lemma:Tsum}  and  \ref{lemma:TnTm},
the sequence $\lbrace T_n\rbrace_{n \in \mathbb Z}$ is a $\Z$-grading of $T$.
\end{note}

\begin{lemma} The subspace $T_0$ is a subalgebra of $T$.
\end{lemma}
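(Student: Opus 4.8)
The plan is to show that $T_0$ is closed under multiplication and contains the identity, which together establish that $T_0$ is a subalgebra of $T$. We already know from Lemma \ref{lemma:Tsum} that $T_0$ is a subspace of $T$, so the only content is the multiplicative closure.

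First I would address closure under multiplication. This is immediate from Lemma \ref{lemma:TnTm}: taking $n=m=0$ gives $T_0 T_0 \subseteq T_0$. Thus for any $S, S' \in T_0$ we have $SS' \in T_0$, which is exactly the closure we need.

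Next I would verify that $T_0$ contains the identity $I$. By \eqref{eq:Isum} we have $I = \sum_{i=0}^{D} E_i^*$, and by Lemma \ref{lemma:Tncontains} each $E_i^*$ lies in $M^* \subseteq T_0$. Since $T_0$ is a subspace, the sum $I$ lies in $T_0$ as well. Combining this with the multiplicative closure from the previous step shows that $T_0$ is a subalgebra of $T$.

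I do not expect any genuine obstacle here, since both required facts have essentially been proved already: the multiplicative closure is the special case $n=m=0$ of the graded multiplication law \eqref{eq:TnTm}, and the presence of the identity follows from the resolution \eqref{eq:Isum} of $I$ into dual idempotents together with $M^* \subseteq T_0$. The only point to be careful about is to state explicitly that $T_0$ is closed under the vector-space operations (which it is, being one of the graded pieces $T_n$ of the direct sum decomposition in Lemma \ref{lemma:Tsum}), so that the phrase ``subalgebra'' is fully justified rather than merely ``closed under multiplication.''
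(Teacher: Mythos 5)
Your proof is correct and follows the same route as the paper, whose entire argument is to set $m=n=0$ in \eqref{eq:TnTm}. Your additional verification that $I \in T_0$ (via \eqref{eq:Isum} and $M^* \subseteq T_0$ from Lemma \ref{lemma:Tncontains}) is a sound completeness point that the paper leaves implicit.
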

\begin{proof} In 
   (\ref{eq:TnTm})
   set $m=n=0$.
   \end{proof}

\begin{definition}                                \label{eq:defQn}
For  $n \in \Z$ define  
$
Q_n=Q \cap T_n.
$
\end{definition}

\begin{lemma}    
For $n \in \mathbb Z$ we have
$Q_n =0$ unless $-D \leq n \leq D$.
\end{lemma}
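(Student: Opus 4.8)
The plan is to deduce the bound on $Q_n$ directly from the corresponding bound already established for $T_n$. By Definition \ref{eq:defQn} we have $Q_n = Q \cap T_n$, so $Q_n \subseteq T_n$ as a subspace. By Lemma \ref{lemma:Tn0}, we know that $T_n = 0$ unless $-D \le n \le D$. Hence for any $n \in \Z$ with $n < -D$ or $n > D$, the space $T_n$ is zero, and therefore its intersection with $Q$ is zero as well.

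Concretely, I would argue as follows. Fix $n \in \Z$ and suppose $n < -D$ or $n > D$. Then by Lemma \ref{lemma:Tn0} we have $T_n = 0$. Since $Q_n = Q \cap T_n \subseteq T_n = 0$, we conclude $Q_n = 0$. Taking the contrapositive, $Q_n \ne 0$ forces $-D \le n \le D$, which is exactly the claimed statement.

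There is essentially no obstacle here: the result is an immediate consequence of the inclusion $Q_n \subseteq T_n$ together with Lemma \ref{lemma:Tn0}. The only thing to verify is that Definition \ref{eq:defQn} indeed gives $Q_n = Q \cap T_n$, so that the inclusion into $T_n$ is automatic, and that Lemma \ref{lemma:Tn0} has already been proved (it has, via \eqref{eq:Eiis0} and Definition \ref{def:defTn}). Thus the proof is a one-line citation of Lemma \ref{lemma:Tn0} combined with Definition \ref{eq:defQn}.
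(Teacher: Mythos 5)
Your proof is correct and is exactly the paper's argument: the paper proves this lemma by citing Lemma \ref{lemma:Tn0} together with Definition \ref{eq:defQn}, which is precisely the inclusion $Q_n = Q \cap T_n \subseteq T_n$ combined with the vanishing of $T_n$ outside $-D \le n \le D$. No differences to report.
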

\begin{proof} 
By Lemma  \ref{lemma:Tn0} and Definition \ref{eq:defQn}.
\end{proof}

\begin{lemma} 
\label{lem:QnAction}
For $n \in \Z$ and $S \in Q$ the following are equivalent:
\begin{itemize}
\item[{\rm (i)}]   $S \in Q_n$;
\item[{\rm (ii)}]  $S E^*_iV \subseteq E^*_{i+n}V$ for $i \in \Z$.
\end{itemize}
\end{lemma}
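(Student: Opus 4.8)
The plan is to reduce this lemma entirely to the already-established Lemma \ref{lem:TnAction}, using the definition $Q_n = Q \cap T_n$ from Definition \ref{eq:defQn}. The one structural fact I would keep in mind throughout is that $Q$ is a subalgebra of $T$ (Definition \ref{def:Q}), so every $S \in Q$ automatically lies in $T$ and is thereby eligible for Lemma \ref{lem:TnAction}. Once this is noted, both implications are immediate.

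For (i) $\Rightarrow$ (ii), I would start from $S \in Q_n$. By Definition \ref{eq:defQn} this means $S \in Q \cap T_n$, so in particular $S \in T_n$. Since $S \in T$, the implication (i) $\Rightarrow$ (ii) of Lemma \ref{lem:TnAction} applies and yields $S E^*_i V \subseteq E^*_{i+n} V$ for all $i \in \Z$, which is precisely (ii).

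For (ii) $\Rightarrow$ (i), I would assume $S \in Q$ satisfies $S E^*_i V \subseteq E^*_{i+n} V$ for all $i \in \Z$. Because $S \in Q \subseteq T$, the implication (ii) $\Rightarrow$ (i) of Lemma \ref{lem:TnAction} gives $S \in T_n$. Combining this with the standing hypothesis $S \in Q$, I conclude $S \in Q \cap T_n = Q_n$, again by Definition \ref{eq:defQn}.

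I do not anticipate any genuine obstacle here: the entire content is supplied by Lemma \ref{lem:TnAction}, and the statement is simply the restriction of that grading characterization from $T$ to its subalgebra $Q$. The only point requiring a moment's care is to invoke the inclusion $Q \subseteq T$ explicitly, so that Lemma \ref{lem:TnAction} can legitimately be applied to an element of $Q$ rather than an arbitrary element of $T$.
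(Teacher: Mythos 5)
Your proof is correct and takes exactly the route of the paper, whose own proof is the one-line citation ``By Lemma \ref{lem:TnAction} and Definition \ref{eq:defQn}''; your write-up simply makes explicit the inclusion $Q \subseteq T$ and the intersection $Q_n = Q \cap T_n$ that the paper leaves implicit.
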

\begin{proof} By
Lemma
\ref{lem:TnAction}
and Definition
   \ref{eq:defQn}.
\end{proof}

\begin{lemma}
We have
$$
I \in Q_0, \ \ \ \ \ \ \ \
L \in Q_{-1}, \ \ \ \ \ \ \ \
F \in Q_0, \ \ \ \ \ \ \ \ 
R \in Q_{1}.
$$
\end{lemma}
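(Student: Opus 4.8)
The plan is to read the four claims as four membership statements of the form ``$S \in Q_n$'' and to settle each by the same two-step recipe: verify $S \in Q$ and verify $S \in T_n$ separately, then invoke $Q_n = Q \cap T_n$ from Definition~\ref{eq:defQn}. No new structural work is needed; the lemma merely records where the generators $L,F,R$ and the identity sit in the $\Z$-grading $\{Q_n\}$, so the whole proof is an assembly of facts already established.

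For $L$, $F$, and $R$ the verification is routine and I would dispatch it first. These three matrices belong to $Q$ directly by Definition~\ref{def:Q}, since $Q$ is the subalgebra generated by them. Their grading locations are already on record: Lemma~\ref{lemma:Tncontains} gives $L \in T_{-1}$, $F \in T_0$, and $R \in T_1$. Intersecting each with $Q$ and applying Definition~\ref{eq:defQn} yields $L \in Q_{-1}$, $F \in Q_0$, and $R \in Q_1$.

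The identity matrix is the one case needing care, and I expect it to be the only substantive step. For its grading location I would use \eqref{eq:Isum} to write $I = \sum_{i=0}^D E_i^*$, so that $I \in M^*$; since $M^* \subseteq T_0$ by Lemma~\ref{lemma:Tncontains} and $T_0$ is a subspace, we obtain $I \in T_0$. The delicate point is $I \in Q$: this holds because $Q$, as a subalgebra of $T$, is understood to contain the identity matrix. It is worth flagging that $I$ need not arise as an algebraic combination of $L,F,R$ carrying no constant term (for instance, when $0$ is an eigenvalue of $A = L+F+R$, it is already not a polynomial without constant term in $A$), so the containment $I \in Q$ reflects the unital convention for generated subalgebras rather than an identity among $L,F,R$.

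With both conditions established, $I \in Q \cap T_0 = Q_0$ by Definition~\ref{eq:defQn}, which completes the four claims. The proof is short precisely because the grading, the locations of $L,F,R$, and the definition $Q_n = Q \cap T_n$ are all in hand; the sole genuine observation is that the identity lives in degree $0$.
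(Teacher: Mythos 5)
Your proof is correct and follows essentially the same route as the paper, whose entire proof reads ``By Lemma \ref{lemma:Tncontains} and Definition \ref{eq:defQn}'': you combine $M^*\subseteq T_0$ and $L\in T_{-1}$, $F\in T_0$, $R\in T_1$ from Lemma \ref{lemma:Tncontains} with $Q_n=Q\cap T_n$, exactly as intended. Your added remark that $I\in Q$ rests on the unital convention for generated subalgebras (rather than on expressing $I$ as a constant-free word in $L,F,R$) is a sound clarification the paper leaves implicit.
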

\begin{proof} 
By Lemma  \ref{lemma:Tncontains} and Definiton \ref{eq:defQn}.
\end{proof}

\begin{proposition}      \label{prop:gradeQ}
The following is a direct sum of vector spaces:
\begin{equation}                      \label{eq:gradeQ}
Q =  \sum_{n \in \Z} \, Q_n.
\end{equation}
Moreover, 
\begin{equation}                                \label{eq:QnQm}
Q_nQ_m \subseteq Q_{n+m} \qquad \qquad (n,m \in \mathbb Z).
\end{equation}    
\end{proposition}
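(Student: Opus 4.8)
The plan is to deduce everything from the $\Z$-grading of $T$ already established in Lemmas \ref{lemma:Tsum} and \ref{lemma:TnTm}, together with the fact that $Q$ is generated by the homogeneous elements $L,F,R$. I would first dispose of the multiplicative property \eqref{eq:QnQm}. Fix $n,m \in \Z$. Since $Q$ is an algebra, $Q_nQ_m \subseteq QQ \subseteq Q$. On the other hand, $Q_n \subseteq T_n$ and $Q_m \subseteq T_m$ by Definition \ref{eq:defQn}, so Lemma \ref{lemma:TnTm} gives $Q_nQ_m \subseteq T_nT_m \subseteq T_{n+m}$. Combining these, $Q_nQ_m \subseteq Q \cap T_{n+m} = Q_{n+m}$, as desired.

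Next I would establish that the sum in \eqref{eq:gradeQ} is direct. This is immediate from Lemma \ref{lemma:Tsum}: if $\sum_n q_n = 0$ with $q_n \in Q_n$ for all $n$, then since $q_n \in T_n$ and the sum $\sum_n T_n$ is direct, each $q_n = 0$. Thus the subspaces $\{Q_n\}_{n \in \Z}$ are independent, and $\sum_n Q_n$ is a direct sum. The inclusion $\sum_{n} Q_n \subseteq Q$ is clear since each $Q_n \subseteq Q$, so it remains only to prove the reverse inclusion.

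For $Q \subseteq \sum_n Q_n$, recall that $Q$ is the subalgebra of $T$ generated by $L,F,R$, so $Q$ is spanned by $I$ together with all finite products $g_1g_2\cdots g_k$ with each $g_j \in \{L,F,R\}$. By the lemma preceding this proposition, $I \in Q_0$ and each generator is homogeneous: $L \in Q_{-1}$, $F \in Q_0$, $R \in Q_1$. Hence if $g_j \in Q_{n_j} = Q \cap T_{n_j}$, then iterating Lemma \ref{lemma:TnTm} yields $g_1\cdots g_k \in T_{n_1+\cdots+n_k}$; since this product also lies in $Q$, it lies in $Q_{n_1+\cdots+n_k}$. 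Therefore every spanning element of $Q$ is homogeneous, so $Q \subseteq \sum_n Q_n$. The step most deserving of care is precisely this one: one must check that the homogeneous decomposition of an element of $Q$ can be taken inside $Q$ rather than merely inside $T$. The generator argument handles this cleanly, because the generators are already homogeneous and degrees add under multiplication, so there is no need to invoke any projection of $T$ onto $T_n$ that might fail to preserve $Q$.
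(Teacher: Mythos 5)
Your proposal is correct and takes essentially the same route as the paper: the paper's terse proof derives \eqref{eq:gradeQ} from Lemma \ref{lemma:Tsum}, Lemma \ref{lemma:Tncontains} and Definition \ref{eq:defQn}, and \eqref{eq:QnQm} from \eqref{eq:TnTm} and Definition \ref{eq:defQn}, which are exactly the ingredients you use. You have simply spelled out the generator argument (homogeneous generators $L\in Q_{-1}$, $F\in Q_0$, $R\in Q_1$, degrees adding under multiplication, directness inherited from $T=\sum_{n\in\Z}T_n$) that the paper leaves implicit.
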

\begin{proof}
Line \eqref{eq:gradeQ} follows from  Lemmas   
\ref{lemma:Tsum}, \ref{lemma:Tncontains}
and Definition \ref{eq:defQn}.
Line \eqref{eq:QnQm} follows from  \eqref{eq:TnTm} 
and Definition \ref{eq:defQn}.
\end{proof}

\begin{note}
By Proposition \ref{prop:gradeQ}, the sequence
$\lbrace Q_n \rbrace_{n \in \Z}$ 
is a $\Z$-grading of $Q$.
\end{note}

\begin{lemma} The subspace $Q_0$ is a subalgebra of
$Q$.
\end{lemma}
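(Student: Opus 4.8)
The plan is to mirror exactly the earlier argument that $T_0$ is a subalgebra of $T$, the only difference being that the needed multiplicative property of the grading is now supplied by Proposition \ref{prop:gradeQ} rather than by Lemma \ref{lemma:TnTm}. There is essentially no obstacle here: the statement is an immediate formal consequence of the $\Z$-grading of $Q$ already in hand.

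First I would record that $Q_0$ is a vector subspace of $Q$. This is immediate from Definition \ref{eq:defQn}, since $Q_0 = Q \cap T_0$ is an intersection of two subspaces of $\Mat_X(\C)$. Next, to verify closure under multiplication, I would specialize the inclusion \eqref{eq:QnQm} to the case $n=m=0$, which yields $Q_0 Q_0 \subseteq Q_0$. Thus $Q_0$ is closed under the multiplication of $Q$.

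Finally, to see that $Q_0$ contains a multiplicative identity, I would invoke the lemma asserting $I \in Q_0$. Since $I$ serves as the identity element of $Q$ (indeed of $\Mat_X(\C)$), and since $Q_0$ is a subspace of $Q$ that is closed under multiplication and contains $I$, it follows that $Q_0$ is a subalgebra of $Q$. The main point to highlight is simply that all the work has already been done in establishing the grading \eqref{eq:gradeQ}, \eqref{eq:QnQm}; the present lemma is a one-line specialization of that structure.
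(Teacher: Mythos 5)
Your proof is correct and matches the paper's own argument, which consists precisely of setting $m=n=0$ in \eqref{eq:QnQm}. Your additional remarks that $Q_0$ is a subspace (as an intersection of subspaces) and contains $I$ are harmless elaborations of the same one-line specialization.
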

\begin{proof} 
In (\ref{eq:QnQm}) set $m=n=0$.
\end{proof}

In general the  $\lbrace E^*_i\rbrace_{i=0}^D$ are not
contained in $Q$. Nevertheless we have the following.

\begin{lemma}                     \label{lemma:gradeQ} 
For $n \in \Z$ and $S \in Q$ the following are equivalent:
\begin{itemize}
\item[{\rm (i)}]  $S \in Q_n$;
\item[{\rm (ii)}]  $SE^*_i = E^*_{i+n} S$ for $i \in \Z$.
\end{itemize}
Suppose {\rm (i)}, {\rm (ii)} hold. 
Then $SE^*_i = E^*_{i+n}S E^*_i = E^*_{i+n}S $ for $i \in \Z$.
\end{lemma}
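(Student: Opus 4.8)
The statement is an analogue for $Q$ of Lemma \ref{lemma:TnEi}, which was already proved for $T$. The plan is to deduce the result for $Q$ by transporting the $T$-version across the inclusion $Q \subseteq T$, using only that $Q_n$ was defined as $Q \cap T_n$. First I would observe that $S \in Q$ by hypothesis, and I want to relate membership in $Q_n$ to the intertwining relation $SE_i^* = E_{i+n}^* S$.

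For the implication (i) $\Rightarrow$ (ii), suppose $S \in Q_n$. By Definition \ref{eq:defQn} this means $S \in Q \cap T_n$, so in particular $S \in T_n$. Now Lemma \ref{lemma:TnEi}, applied with this $S \in T$, immediately yields $SE_i^* = E_{i+n}^* S$ for all $i \in \Z$, which is exactly (ii). For the reverse implication (ii) $\Rightarrow$ (i), suppose $S \in Q$ satisfies $SE_i^* = E_{i+n}^* S$ for all $i \in \Z$. Since $Q \subseteq T$, we have $S \in T$, and the equivalence in Lemma \ref{lemma:TnEi} gives $S \in T_n$. Combining $S \in Q$ with $S \in T_n$ produces $S \in Q \cap T_n = Q_n$ by Definition \ref{eq:defQn}, which is (i).

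Finally, for the supplementary assertion, assume (i) and (ii) hold. Then $S \in T_n$ as above, so the last sentence of Lemma \ref{lemma:TnEi} applies verbatim to give $SE_i^* = E_{i+n}^* S E_i^* = E_{i+n}^* S$ for all $i \in \Z$, as desired.

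The argument is entirely routine; the only thing to be careful about is keeping track of the two-part structure of the definition $Q_n = Q \cap T_n$, so that the $Q$-membership is carried along untouched while the grading condition is imported from the already-established $T$-version. I expect no genuine obstacle here, since every ingredient—the equivalence and the refined equality—is supplied by Lemma \ref{lemma:TnEi}, and the role of Definition \ref{eq:defQn} is simply to add back the hypothesis $S \in Q$.
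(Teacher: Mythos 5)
Your proof is correct and follows exactly the paper's route: the paper proves this lemma simply by citing Lemma \ref{lemma:TnEi} and Definition \ref{eq:defQn}, which is precisely the two-part argument (import the grading equivalence from the $T$-version, carry the $Q$-membership along via $Q_n = Q \cap T_n$) that you spell out. No gaps; your version is just a more detailed writing-out of the same deduction.
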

\begin{proof}
By Lemma \ref{lemma:TnEi}  and Definition   \ref{eq:defQn}.

\end{proof}



\section{Irreducible $T$-modules and $Q$-modules}
\label{section:Qmodules}

Recall that $Q$ is a subalgebra of $T$.
Let $W$ denote a $T$-module. 
For the $T$-module $W$ 
the restriction of the $T$-action
to $Q$ turns $W$ into a $Q$-module.
Assume that
the $T$-module $W$ is irreducible. In this section
we show that the $Q$-module $W$ is irreducible.

\begin{lemma}
\label{lem:Wgen}
Let $W$ denote an irreducible $T$-module.
Then for $0 \ne v \in W$ we have $Tv=W$.
\end{lemma}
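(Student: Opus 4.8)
The plan is to use the defining property of an irreducible module together with the fact that $Tv$ is automatically a submodule. First I would observe that for $0 \ne v \in W$, the set $Tv = \{Sv \mid S \in T\}$ is a subspace of $W$: it is clearly closed under scalar multiplication and addition since $T$ is a vector space, and $Tv \subseteq W$ because $W$ is a $T$-module and $v \in W$. The key structural point is that $Tv$ is itself a $T$-module, since $T(Tv) = (TT)v \subseteq Tv$, using that $T$ is closed under multiplication (it is an algebra).

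Next I would note that $Tv$ is nonzero. This is where the identity matrix $I$ enters: since $I \in T$ and $Iv = v \ne 0$, we have $v \in Tv$, so $Tv \ne 0$. (Here $T \subseteq \Mat_X(\C)$ contains $I$ because $I = \sum_{i=0}^D E_i^*$ by \eqref{eq:Isum}, and the $E_i^*$ generate $T$ together with $A$.)

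Finally I would invoke irreducibility. Since $W$ is an irreducible $T$-module, its only $T$-submodules are $0$ and $W$. We have shown $Tv$ is a $T$-module with $0 \ne Tv \subseteq W$, so the only remaining possibility is $Tv = W$, which is the desired conclusion.

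There is no real obstacle here; the statement is a direct consequence of the definition of irreducibility. The only point requiring the slightest care is confirming that $Tv$ is a genuine $T$-submodule (closure under the $T$-action) and that it is nonzero (which needs $I \in T$); both are immediate. I expect the author's proof to be a single short paragraph making exactly these observations.
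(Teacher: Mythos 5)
Your proposal is correct and matches the paper's proof exactly: the paper's entire argument is the one-line observation that $Tv$ is a nonzero $T$-submodule of the irreducible module $W$, which is precisely the argument you spell out (including the use of $I \in T$ to see $Tv \ne 0$). You have simply made explicit the routine verifications the paper leaves implicit.
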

\begin{proof} Since  $Tv$ is a nonzero 
$T$-submodule of $W$.
\end{proof}

\begin{lemma}                        \label{lemma:generateQ}
Let $W$ denote an irreducible $T$-module.
Pick a nonzero $v \in W$ that is a common eigenvector for $M^*$.
Then  $W=Qv$.
\end{lemma}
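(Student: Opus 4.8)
The plan is to show $W = Qv$ by exploiting the relationship $T = M^*Q$ from Proposition \ref{thm:QM} together with the fact that $v$ is a common eigenvector for $M^*$. Since $W$ is an irreducible $T$-module and $v \ne 0$, Lemma \ref{lem:Wgen} gives $W = Tv$. Substituting $T = M^*Q$ (the right-hand equality in Proposition \ref{thm:QM}), we obtain $W = Tv = M^*Qv$. The goal is therefore to absorb the leading $M^*$ factor, i.e. to prove $M^*Qv = Qv$.

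The key observation is that $v$ is a common eigenvector for the commutative algebra $M^*$, and $M^*$ is spanned by the dual idempotents $\{E_i^*\}_{i=0}^D$. Since the $E_i^*$ are mutually orthogonal idempotents summing to $I$ (equations \eqref{eq:Isum} and \eqref{eq:EiEj}), the common eigenvectors of $M^*$ are exactly the nonzero vectors lying in a single $E_i^*V$; so there is an index $r$ with $E_r^*v = v$ and $E_i^*v = 0$ for $i \ne r$. First I would record this, so that $M^*v \subseteq \C v \subseteq Qv$ (using $I \in Q_0 \subseteq Q$). This handles the factor $M^*$ only when it is applied directly to $v$; the real work is to commute $M^*$ past an arbitrary element of $Q$.

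This is where the $\Z$-grading of $Q$ enters, and I expect it to be the main technical step. By Proposition \ref{prop:gradeQ}, every element of $Q$ is a sum of homogeneous components $S \in Q_n$, so it suffices to show $E_i^* S v \in Qv$ for each $E_i^*$ and each homogeneous $S \in Q_n$. Here I would invoke Lemma \ref{lemma:gradeQ}: for $S \in Q_n$ we have $S E_j^* = E_{j+n}^* S$ for all $j \in \Z$. Applying this with $j = i - n$ gives $E_i^* S = S E_{i-n}^*$, and since $E_{i-n}^*v$ is a scalar multiple of $v$ (namely $v$ itself if $i - n = r$, and $0$ otherwise), we conclude $E_i^* S v = S E_{i-n}^* v \in \C\, S v \subseteq Qv$. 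Thus each generator $E_i^*$ of $M^*$ maps $Qv$ into $Qv$, whence $M^*Qv \subseteq Qv$; the reverse inclusion $Qv \subseteq M^*Qv$ is immediate since $I \in M^*$.

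Combining these, $W = M^*Qv = Qv$, as desired. The only delicate point is the reduction to homogeneous components: one must note that the commutation relation $SE_j^* = E_{j+n}^*S$ holds componentwise, so that $E_i^*$ applied to a general element $S' = \sum_n S_n$ of $Q$ (with $S_n \in Q_n$) satisfies $E_i^* S' v = \sum_n E_i^* S_n v = \sum_n S_n E_{i-n}^* v$, and each surviving term is a scalar multiple of $S_n v \in Qv$. No genuine obstacle arises, since all the structural inputs — $T = M^*Q$, the grading of $Q$, and the eigenvector property of $v$ — have already been established earlier in the excerpt.
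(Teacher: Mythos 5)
Your proof is correct, but you took the long way around: the paper's own proof is a one-liner that uses the \emph{other} equality in Proposition \ref{thm:QM}. Writing $T = QM^*$ (rather than $T = M^*Q$) puts the $M^*$ factor directly against $v$, so that $W = Tv = QM^*v = Q(\C v) = Qv$, with no commutation needed at all --- the eigenvector hypothesis $M^*v = \C v$ does all the work instantly. By instead choosing $T = M^*Q$, you were forced to move $M^*$ past $Q$, which you did via the $\Z$-grading: decomposing $S' \in Q$ into homogeneous parts $S_n \in Q_n$ and applying Lemma \ref{lemma:gradeQ} to get $E_i^* S_n = S_n E_{i-n}^*$, then using that $E_{i-n}^* v \in \C v$. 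Every step of this is sound, and the reduction to homogeneous components is handled carefully, so there is no gap; but note that what you proved along the way is essentially a pointwise re-derivation of the commutation $M^*Q = QM^*$ that Corollary \ref{cor:LMML} and Proposition \ref{thm:QM} already package for you. The only thing your heavier route buys is an explicit description of how $E_i^*$ acts on $Qv$ (namely $E_i^* Q_n v = Q_n$ applied to $E_{i-n}^*v$), which is in the spirit of Lemma \ref{lemma:QW1} later in the paper; for the statement at hand, the paper's choice of factorization makes the argument immediate.
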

\begin{proof}
By assumption $M^*v = \C v$. Using
Proposition
 \ref{thm:QM}
and
Lemma
\ref{lem:Wgen} we obtain
$W=Tv=QM^*v=Qv$.
\end{proof}

\begin{proposition}             \label{thm:irreducible}
Let $W$ denote an irreducible $T$-module. 
Then the $Q$-module  $W$ is  irreducible.
\end{proposition}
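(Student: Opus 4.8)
The plan is to show that any nonzero $Q$-submodule of $W$ must equal $W$, using the fact that irreducible $T$-modules decompose cleanly along the dual idempotents $E_i^*$ together with Lemma \ref{lemma:generateQ}. The key observation is that to invoke Lemma \ref{lemma:generateQ} I need a nonzero vector $v \in W$ that is a common eigenvector for $M^*$; fortunately such vectors are abundant in any $Q$-submodule because the $E_i^*$ give an orthogonal decomposition of $W$ as in \eqref{eq:Wsum}.

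First I would let $0 \ne U \subseteq W$ be a nonzero $Q$-submodule, and aim to prove $U = W$. The central point is that $U$ itself decomposes along the dual idempotents. Although the $E_i^*$ need not lie in $Q$, I claim that $U = \sum_{i=0}^{D} E_i^* U$. To see this, take any $0 \ne u \in U$ and write $u = \sum_{i=0}^{D} E_i^* u$ using \eqref{eq:Isum}. I want each $E_i^* u$ to lie back in $U$. The way to achieve this is to apply suitable elements of $Q$ that act as the $E_i^*$ do: since $F \in Q_0$ and more generally the graded pieces $Q_n$ satisfy $Q_n E_i^* = E_{i+n}^* Q_n$ by Lemma \ref{lemma:gradeQ}, I can separate the components $E_i^* u$ by applying powers and products of $L, F, R$ that shift and then return to a fixed degree. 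Concretely, for a fixed index $i$ in the support, the operator $R^{D-i} L^{D-i}$ lies in $Q_0$ and acts on each $E_j^* V$ by sending it into $E_j^* V$; choosing appropriate polynomial combinations (exploiting that distinct eigenspaces of $M^*$ within $W$ are separated) isolates $E_i^* u$ as an element of $Qu \subseteq U$.

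Once I know $U = \sum_{i} E_i^* U$, pick an index $i$ with $E_i^* U \ne 0$ and choose a nonzero $v \in E_i^* U$. Then $v$ is a common eigenvector for $M^*$, since $E_j^* v = \delta_{ij} v$ by \eqref{eq:EiEj}. Because $U$ is a $Q$-module and $v \in U$, we have $Qv \subseteq U$. But $v$ is also a nonzero element of the irreducible $T$-module $W$ that is a common $M^*$-eigenvector, so Lemma \ref{lemma:generateQ} gives $W = Qv \subseteq U \subseteq W$, forcing $U = W$. Hence $W$ has no proper nonzero $Q$-submodule, and since $W$ is nonzero it is an irreducible $Q$-module.

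The main obstacle is the first step: rigorously extracting each isotypic component $E_i^* u$ back into $U$ using only elements of $Q$, given that the $E_i^*$ themselves are not assumed to lie in $Q$. The cleanest route is to exploit the $\Z$-grading of Proposition \ref{prop:gradeQ} and Lemma \ref{lemma:gradeQ}: one shows that projection onto a single $E_i^* U$ can be realized by a $Q_0$-element that restricts to the identity on the relevant $M^*$-eigenspace and annihilates the others, which is possible because $W$ is finite-dimensional and the action of $M^* \cap Q_0$-built operators separates the finitely many occupied idempotent-levels $r \le i \le r+d$ from \eqref{eq:Wsum}. If a fully explicit separating operator proves awkward, an alternative is to argue directly that $Qv$ already contains every $E_j^* W$ by climbing up and down the grading with $R$ and $L$, again reducing to Lemma \ref{lemma:generateQ} applied to the single eigenvector $v$; I expect the latter dimensional/grading argument to be the technically delicate part of the proof.
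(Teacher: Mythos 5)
Your endgame is right---once you have a nonzero common $M^*$-eigenvector lying inside the $Q$-submodule $U$, Lemma \ref{lemma:generateQ} gives $W = Qv \subseteq U$ and you are done---but the step that is supposed to produce such a vector has a genuine gap. You propose to show $E_i^* u \in Qu$ by constructing an element of $Q_0$ that acts as the identity on $E_i^* W$ and annihilates the other levels $E_j^* W$. Nothing supports the existence of such an element: the paper explicitly notes that the $E_i^*$ are in general not contained in $Q$, and while every element of $Q_0$ preserves each level $E_j^* V$ (Lemma \ref{lem:QnAction}), it does not act on the levels by scalars---$F$ and $R^{D-i}L^{D-i}$ are arbitrary operators on each $E_j^* W$---so there is no spectral separation to exploit and no Lagrange-interpolation-style ``polynomial combination'' available. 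The decomposition \eqref{eq:Wsum} is a decomposition into $Q_0$-submodules, but the projections onto the summands of a module decomposition need not lie in the image of the acting algebra (they fail to whenever distinct levels share an irreducible $Q_0$-constituent). Your fallback (``climb with $R$ and $L$ from the single eigenvector $v$'') is circular as written, since $v$ was obtained from the very separation step in question.

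The repair---and the paper's actual argument---avoids isolating components altogether. Take $0 \ne z \in U$, write $z=\sum_{i=0}^{j} E_{r+i}^* z$ with $E_{r+j}^* z \ne 0$, where $r$ is the endpoint of $W$ and $j$ is the top occupied index, and apply the entire graded piece $Q_{-j}$ to $z$. For $i<j$ one has $Q_{-j}E_{r+i}^* z \subseteq E_{r+i-j}^* V \cap W = E_{r+i-j}^* W = 0$ because $r+i-j<r$: the lower components are killed automatically by the endpoint condition, so no projection is ever needed. What survives is
$$
Q_{-j}z \;=\; Q_{-j}E_{r+j}^* z \;=\; E_r^* Q E_{r+j}^* z \;=\; E_r^* W,
$$
using Lemma \ref{lemma:gradeQ}, the grading of $Q$, and Lemma \ref{lemma:generateQ} applied to the $M^*$-eigenvector $E_{r+j}^* z$ (which need not itself lie in $U$---only the containment $Q_{-j}z \subseteq Qz \subseteq U$ is used). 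Then $W = QE_r^* W = QQ_{-j}z \subseteq Qz \subseteq U$, forcing $U=W$. The moral is that maximal lowering, rather than a separating idempotent, is what extracts a full eigenspace $E_r^*W$ into $U$; this is exactly the device your proposal was missing.
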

\begin{proof}
Let $r$ and $d$ denote the endpoint and diameter of $W$,
respectively.
Let $W^\prime$ denote a nonzero $Q$-submodule of $W$. 
We show that $W=W^\prime$.
Pick $0 \not=z \in W^\prime$. 
By \eqref{eq:Isum} we have $z=\sum_{i=0}^d \, E_{r+i}^*z$. 
Define $j=\max\{i|0 \le i \le d,\; E_{r+i}^*z \ne 0 \}$.
Thus  $z=\sum_{i=0}^j \, E_{r+i}^*z$ 
and $E_{r+j}^*z \not=0$.
We have
\begin{equation}                                            \label{eq:Qjz}
Q_{-j}z =  Q_{-j} \sum_{i=0}^j \, E_{r+i}^*z 
     = Q_{-j} E_{r+j}^*z
     = E_r^*Q_{-j} E_{r+j}^*z
     = E_r^*Q E_{r+j}^*z
     =E_r^*W.
\end{equation}
In the above line, the second equality holds by 
Lemma \ref{lem:QnAction}, the definition of $r$,
and since $W$ is a $Q$-module.
The third equality holds by Lemma \ref{lemma:gradeQ}.
The fourth equality holds by  \eqref{eq:EiEj}, \eqref{eq:gradeQ}, and  Definition \ref{eq:defQn}.
The last equality follows by Lemma \ref{lemma:generateQ} 
and since  $E_{r+j}^*z$ is a common eigenvector for $M^*$.
By 
Lemma \ref{lemma:generateQ} we have
$W=QE_r^*W$.
By this and  \eqref{eq:Qjz},
$$W=QE_r^*W =QQ_{-j}z\subseteq Qz \subseteq W^\prime.$$
Therefore $W=W^\prime$.
\end{proof}

Let $U$ and $W$ denote irreducible $T$-modules.
We just saw that the $Q$-modules $U,W$ are irreducible.
Assume for the moment 
that the $T$-modules $U,W$ are isomorphic.
Then the $Q$-modules $U,W$ are isomorphic.
Next assume that the $T$-modules $U,W$ are not isomorphic.
It is possible that the $Q$-modules $U,W$ are isomorphic.
We now explain this point in more detail.
By \eqref{eq:decompose} the following sums are direct:
\begin{equation*}                                 \label{eq:decomposeQT}
V = \sum_{\lambda \in T^{\vee}} V_\lambda,\ \ \ \  \ \ \ \  \ \ \ \ 
V = \sum_{\mu \in Q^{\vee}} V_\mu.
\end{equation*}
By this and Proposition  \ref{thm:irreducible},  the inclusion map $Q \to T$ 
induces a surjective map $\psi : T^\vee \to Q^\vee$
such that for $\mu \in Q^\vee$,
\begin{equation*}                    \label{eq:Vmu}
  V_\mu = \sum_{\substack{\lambda \in T^\vee \\[0.7mm] \psi(\lambda)=\mu}} V_\lambda.
\end{equation*}
For $\mu \in Q^\vee$ define
\begin{equation}                        \label{eq:mmu}
m_\mu = \big|\{ \lambda \in T^\vee \, | \psi(\lambda)=\mu\}\big|.
\end{equation}                       
Note that $m_\mu$ is a positive integer. 

\begin{lemma}                        \label{lemma:dimT}
We have
\begin{equation}                                  \label{eq:dimTT}
\dim \, T  = \sum_{\mu\in Q^\vee} m_\mu d_\mu^2.
\end{equation}
\end{lemma}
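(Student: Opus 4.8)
The plan is to begin from the dimension formula \eqref{eq:dimT1}, namely $\dim T = \sum_{\lambda \in T^\vee} d_\lambda^2$, and to regroup the sum over $T^\vee$ according to the fibers of the surjection $\psi : T^\vee \to Q^\vee$. Since $\psi$ is surjective, its fibers $\psi^{-1}(\mu)$ (for $\mu \in Q^\vee$) partition $T^\vee$ into nonempty blocks, and $m_\mu = |\psi^{-1}(\mu)|$ by the definition \eqref{eq:mmu}. So the combinatorial bookkeeping of the regrouping is immediate; the only real content lies in comparing the summands $d_\lambda^2$ across a fixed fiber.

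The crucial observation is that $d_\lambda = d_\mu$ whenever $\psi(\lambda) = \mu$. To see this, let $W$ denote an irreducible $T$-module of type $\lambda$. By Proposition \ref{thm:irreducible}, restricting the $T$-action to $Q$ turns $W$ into an irreducible $Q$-module, and by the construction of $\psi$ this $Q$-module has type $\mu = \psi(\lambda)$. The underlying vector space $W$ is unchanged by this restriction, so its dimension as a $T$-module equals its dimension as a $Q$-module; that is, $d_\lambda = d_\mu$. This identification is the step that genuinely uses the earlier work, and I expect it to be the main (indeed the only) point requiring care, since everything else is a rearrangement of a finite sum.

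With this in hand the computation proceeds directly:
\begin{equation*}
\dim T \;=\; \sum_{\lambda \in T^\vee} d_\lambda^2
\;=\; \sum_{\mu \in Q^\vee} \ \sum_{\substack{\lambda \in T^\vee \\[0.7mm] \psi(\lambda)=\mu}} d_\lambda^2
\;=\; \sum_{\mu \in Q^\vee} \ \sum_{\substack{\lambda \in T^\vee \\[0.7mm] \psi(\lambda)=\mu}} d_\mu^2
\;=\; \sum_{\mu \in Q^\vee} m_\mu\, d_\mu^2 .
\end{equation*}
Here the first equality is \eqref{eq:dimT1}; the second uses that $\psi$ is surjective so that the fibers partition $T^\vee$; the third substitutes $d_\lambda = d_\mu$ on each fiber as established above; and the last applies the definition \eqref{eq:mmu} of $m_\mu$ as the number of types $\lambda$ in the fiber over $\mu$. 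This yields \eqref{eq:dimTT}, completing the argument.
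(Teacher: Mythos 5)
Your proof is correct and follows the same route as the paper: the paper's proof likewise starts from \eqref{eq:dimT1}, invokes $d_\lambda = d_\mu$ whenever $\psi(\lambda)=\mu$ (which rests on Proposition \ref{thm:irreducible}, as you note), and regroups via \eqref{eq:mmu}. Your write-up simply makes explicit the fiber-by-fiber bookkeeping that the paper leaves implicit.
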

\begin{proof}
For $\lambda \in T^\vee$ and $\mu \in Q^\vee$ such that 
$\psi(\lambda)=\mu$, we have $d_\lambda = d_\mu$.
Using this fact and  \eqref{eq:mmu}, we evaluate equation 
  \eqref{eq:dimT1}. The result follows.
\end{proof}

\section{Irreducible $T$-modules and $Q$-modules, cont.}
\label{section:Qmodulescontinued}

In this section we describe how an irreducible $T$-module looks
when viewed as a $Q$-module.

\begin{lemma}                     \label{lemma:QW1}
Let  $W$ denote an irreducible $T$-module 
with  endpoint $r$ and diameter $d$. 
For $i \in \Z$ and $r \le j \le r+d$,
\begin{equation}               \label{eq:QW1}
Q_i E_j^* W = E_{i+j}^*W.
\end{equation}
\end{lemma}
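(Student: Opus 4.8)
The plan is to prove the statement by a short induction on $i$, with the key geometric input being Lemma~\ref{lemma:generateQ}. The claim \eqref{eq:QW1} asserts that the graded pieces $Q_i$ of $Q$ move the weight spaces $E_j^*W$ of an irreducible $T$-module around exactly as one would hope, without any collapse. First I would record the trivial containment $Q_iE_j^*W \subseteq E_{i+j}^*W$, which is immediate from Lemma~\ref{lem:QnAction} (or Lemma~\ref{lemma:gradeQ}): any $S \in Q_i$ satisfies $SE_j^*V \subseteq E_{i+j}^*V$, and since $W$ is a $Q$-module the image lands in $E_{i+j}^*W$. So the entire content is the reverse inclusion, i.e.\ surjectivity onto $E_{i+j}^*W$.

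For the reverse inclusion, the natural base case is $i=0$ together with the single step $i=1$ and $i=-1$, after which one bootstraps. Here is where I expect the crux to lie. The cleanest route is to first establish that $W = QE_r^*W$ and moreover that $E_r^*W$ is one-dimensional, spanned by a common $M^*$-eigenvector $v$ with $E_r^*v = v$; this is essentially what Lemma~\ref{lemma:generateQ} and the proof of Proposition~\ref{thm:irreducible} already deliver, since one may pick $0 \neq v \in E_r^*W$ and get $W = Qv$. Applying the grading $Q = \sum_n Q_n$ and comparing weights via Lemma~\ref{lemma:gradeQ}, one sees $E_{r+k}^*W = Q_k v$ for each $k$, and more generally $E_{r+k}^*W = Q_{k-\ell}E_{r+\ell}^*W$ whenever the right-hand weight space is itself generated from $v$. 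The main obstacle is verifying that $Q_i E_j^*W$ hits \emph{all} of $E_{i+j}^*W$ and not merely a proper subspace; this is exactly where irreducibility of the $T$-module (hence the $Q$-module, by Proposition~\ref{thm:irreducible}) must be used to rule out a nontrivial $Q$-invariant subspace sitting inside the target weight space.

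Concretely, I would argue as follows. Fix $j$ with $r \le j \le r+d$ and set $W' = Q_{-(j-r)}E_j^*W \subseteq E_r^*W$. Since $E_r^*W = \C v$ is one-dimensional, $W'$ is either $0$ or all of $E_r^*W$. Using the same eigenvector manipulation as in \eqref{eq:Qjz} in the proof of Proposition~\ref{thm:irreducible} — pick $0 \neq z \in E_j^*W$ and push it down to $E_r^*$ via an appropriate graded element — one shows $W' \neq 0$, hence $W' = E_r^*W$. Then $Q_i E_j^*W \supseteq Q_i Q_{-(j-r)}E_j^* W$ composed appropriately; more directly, from $E_r^*W = Q_{-(j-r)}E_j^*W$ one gets $E_{i+j}^*W = Q_{i+j-r}E_r^*W = Q_{i+j-r}Q_{-(j-r)}E_j^*W \subseteq Q_i E_j^*W$, using $Q_{i+j-r}Q_{-(j-r)} \subseteq Q_i$ from \eqref{eq:QnQm} and the already-known fact $E_{i+j}^*W = Q_{i+j-r}E_r^*W$.

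The only genuine gap to fill carefully is that last fact, $E_k^*W = Q_{k-r}E_r^*W$ for every $k$ in range, which is the $j=r$ instance of the lemma and follows from $W = QE_r^*W = \sum_n Q_n E_r^*W$ (by Lemma~\ref{lemma:generateQ}) together with the weight-space decomposition \eqref{eq:Wsum}: comparing the $E_k^*$-components on both sides and invoking Lemma~\ref{lemma:gradeQ} forces $E_k^*W = Q_{k-r}E_r^*W$, since $Q_n E_r^*W \subseteq E_{r+n}^*W$ isolates the contribution to each weight. I expect no surprises beyond bookkeeping with the grading once this anchor is in place; the genuine leverage is one-dimensionality of $E_r^*W$ (so that nonvanishing forces surjectivity) combined with the subalgebra property $Q_aQ_b \subseteq Q_{a+b}$.
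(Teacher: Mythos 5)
There is a genuine error in your argument: the claim that $E_r^*W$ is one-dimensional. This is false for a general irreducible $T$-module. The condition $\dim E_i^*W \le 1$ is precisely the \emph{thin} property, which the paper introduces in Section 10 as a special type of irreducible $T$-module; for an arbitrary graph, irreducible $T$-modules need not be thin, and even the endpoint component $E_r^*W$ can have dimension greater than $1$. Your justification, ``one may pick $0 \neq v \in E_r^*W$ and get $W = Qv$,'' does not deliver one-dimensionality: $W = Qv$ only yields $E_r^*W = Q_0 v$, and $Q_0$ is a subalgebra whose action can carry $v$ to a subspace of dimension larger than one. Since you explicitly identify this as the ``genuine leverage'' of the proof (``nonvanishing forces surjectivity''), the step $W' \neq 0 \Rightarrow W' = E_r^*W$ is unsupported as written.

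The good news is that the gap is repairable, and the repair reveals that your detour through the endpoint is unnecessary — this is exactly the route the paper takes. Lemma \ref{lemma:generateQ} applies to \emph{any} nonzero common $M^*$-eigenvector, and every nonzero $z \in E_j^*W$ is such a vector (it satisfies $E_k^*z = \delta_{kj}z$). Hence $W = Qz = QE_j^*W$ directly at level $j$, with no need to anchor at $r$; then projecting and using Lemma \ref{lemma:gradeQ} together with the grading $Q = \sum_n Q_n$ gives
\begin{equation*}
E_{i+j}^*W = E_{i+j}^*QE_j^*W = Q_iE_j^*W,
\end{equation*}
which is the whole proof. In particular, the manipulation from \eqref{eq:Qjz} that you invoke to show $W' \neq 0$ in fact already ends with the full equality $Q_{r-j}z = E_r^*Qz = E_r^*W$ (because $Qz = W$), so even within your own architecture the one-dimensionality crutch was never needed. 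Your trivial-containment observation, the anchor identity $E_{r+k}^*W = Q_kE_r^*W$, and the composition step via $Q_aQ_b \subseteq Q_{a+b}$ are all correct; only the false dimension claim, and the reliance on it, must go.
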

\begin{proof}
By Lemma \ref{lemma:generateQ}, $W=QE_j^*W$.
By this and Lemma  \ref{lemma:gradeQ},
$$E_{i+j}^*W = E_{i+j}^*QE_j^*W = Q_iE_j^*W.$$
\end{proof}

\begin{lemma}                     \label{lemma:QW2}
Let  $W$ denote an irreducible $T$-module
with  endpoint $r$ and diameter $d$. 
Then for  $0 \le i \le d$,
$$          
Q_i W = \sum_{\ell = i}^d \ E_{r + \ell}^*W,  \ \ \ \ \ \ \ \ \ \ \ \ \
Q_{-i} W = \sum_{\ell =0}^{d-i} \ E_{r + \ell}^*W.
$$
Moreover for $i \ge d+1$,
$$Q_i W = 0,  \ \ \ \ \ \ \ \ \ \ \ \ \  Q_{-i} W =0.$$
\end{lemma}
\begin{proof}
Use \eqref{eq:Wsum} and Lemma \ref{lemma:QW1}.
\end{proof}

\begin{corollary}                     \label{cor:QW4}
Let  $W$ denote an irreducible $T$-module
with endpoint $r$ and diameter $d$. 
Then 
$$Q_d W = E_{r+d}^*W, \ \ \ \ \ \ \ \ \ \ \ \ \  Q_{-d} W = E_{r}^*W.$$
\end{corollary}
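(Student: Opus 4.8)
The plan is to obtain Corollary \ref{cor:QW4} as the extreme case of Lemma \ref{lemma:QW2}, simply by setting $i=d$ in the two displayed formulas of that lemma. I would treat the two assertions $Q_d W = E_{r+d}^*W$ and $Q_{-d} W = E_r^* W$ separately, and in each case point to the corresponding sum in Lemma \ref{lemma:QW2}.

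For the first assertion, I would substitute $i=d$ into $Q_i W = \sum_{\ell=i}^d E_{r+\ell}^*W$. The summation range then degenerates: $\ell$ runs from $d$ to $d$, so the sum collapses to the single term $E_{r+d}^*W$. This immediately gives $Q_d W = E_{r+d}^*W$. Symmetrically, for the second assertion I would substitute $i=d$ into $Q_{-i} W = \sum_{\ell=0}^{d-i} E_{r+\ell}^*W$; here the upper limit becomes $d-i = 0$, so the sum collapses to the single term $E_r^*W$, yielding $Q_{-d} W = E_r^*W$.

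So the whole proof is just a one-line specialization: invoke Lemma \ref{lemma:QW2} at $i=d$ and observe that each sum has exactly one surviving summand. The only thing requiring a moment's care is confirming that $i=d$ is a legitimate value in Lemma \ref{lemma:QW2}, i.e. that $d$ lies in the allowed range $0 \le i \le d$; since $d \ge 0$ for any irreducible $T$-module (by the definitions of diameter and endpoint in Section \ref{sec:subconstalgebra}), this is immediate. I do not anticipate any genuine obstacle here, as the statement is a direct corollary and no new idea beyond reading off the boundary terms of the preceding lemma is needed.

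Concretely, the proof I would write is:
\begin{proof}
In Lemma \ref{lemma:QW2} set $i=d$.
\end{proof}
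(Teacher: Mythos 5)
Your proposal is correct and is exactly the paper's own proof: the paper simply sets $i=d$ in Lemma \ref{lemma:QW2}, and your verification that both sums collapse to a single term ($E_{r+d}^*W$ and $E_r^*W$ respectively) is precisely the intended reading.
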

\begin{proof}
Set $i=d$ in Lemma   \ref{lemma:QW2}.
\end{proof}

\begin{corollary}                     \label{cor:QW3}
Let  $W$ denote an irreducible $T$-module. 
Then the diameter $d$ of $W$ is given by
$$
d =  \max\{i \, |  \ 0 \leq i \leq D, Q_i W \ne 0\}.
$$
\end{corollary}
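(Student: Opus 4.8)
The plan is to deduce this from Lemma~\ref{lemma:QW2}, which already gives us complete control over how the graded pieces $Q_i$ act on $W$. The statement asserts that the diameter $d$ can be recovered purely from the $Q$-module structure, namely as the largest positive index $i$ for which $Q_i W \ne 0$. I would prove this by a two-sided inequality, establishing first that $Q_i W \ne 0$ for $i=d$ and then that $Q_i W = 0$ for all $i > d$; together these show that $d$ is exactly the claimed maximum.

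First I would observe that $W$ is nonzero, so there is some $j$ with $r \le j \le r+d$ and $E_j^* W \ne 0$; in fact by the comment following \eqref{eq:Wsum} (the ``if and only if'' from \cite[Lemma 3.9(ii)]{Talg}), every such $E_j^*W$ with $r \le j \le r+d$ is nonzero. The upper end gives $E_{r+d}^*W \ne 0$. Now by Corollary~\ref{cor:QW4} we have $Q_d W = E_{r+d}^*W \ne 0$, which shows $d$ lies in the set $\{i \mid 0 \le i \le D,\; Q_i W \ne 0\}$. This handles the lower bound: the maximum is at least $d$.

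For the upper bound I would invoke the final clause of Lemma~\ref{lemma:QW2}, which states that $Q_i W = 0$ for every $i \ge d+1$. Hence no index exceeding $d$ contributes to the set, and the maximum is at most $d$. Combining the two directions, the maximum of $\{i \mid 0 \le i \le D,\; Q_i W \ne 0\}$ equals $d$, as claimed. I should also note that $d$ itself satisfies $0 \le d \le D$ by its definition, so the constraint $0 \le i \le D$ in the displayed maximum is consistent and does not exclude the witnessing index $i=d$.

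The proof is essentially a packaging of the two halves of Lemma~\ref{lemma:QW2}, so there is no real obstacle here; the only point requiring a moment's care is confirming that $E_{r+d}^*W \ne 0$ so that the witness $Q_d W$ is genuinely nonzero, rather than vacuously included. This follows immediately from the definition of diameter $d$ together with the characterization that $E_i^*W \ne 0$ precisely when $r \le i \le r+d$. Once that is noted, the statement is a direct consequence of Corollary~\ref{cor:QW4} and Lemma~\ref{lemma:QW2}.
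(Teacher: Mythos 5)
Your proof is correct and follows essentially the same route as the paper, which cites exactly Corollary~\ref{cor:QW4} (giving $Q_dW = E_{r+d}^*W \ne 0$) and the last assertion of Lemma~\ref{lemma:QW2} (giving $Q_iW = 0$ for $i \ge d+1$). You have merely spelled out the two-sided inequality and the nonvanishing of $E_{r+d}^*W$ that the paper leaves implicit.
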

\begin{proof}
By 
Corollary \ref{cor:QW4}
and the last assertion of 
Lemma \ref{lemma:QW2}.
\end{proof}

\begin{lemma}          \label{lemma:Qisodiam}
Let  $U$ and $W$ denote  irreducible $T$-modules 
that are isomorphic as $Q$-modules. 
Then they have the same diameter.
\end{lemma}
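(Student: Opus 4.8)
The plan is to characterize the diameter of an irreducible $T$-module purely in terms of the $Q$-module structure, so that a $Q$-isomorphism between $U$ and $W$ forces their diameters to agree. The key observation is Corollary \ref{cor:QW3}, which already expresses the diameter $d$ of an irreducible $T$-module $W$ as a quantity depending only on the action of the graded pieces $Q_i$ on $W$, namely $d = \max\{i \mid 0 \le i \le D,\ Q_i W \ne 0\}$. The whole point of this formulation is that it refers only to $Q$ and its grading, not to the ambient algebra $T$ or to the dual idempotents $E_i^*$ directly. So the strategy is to show that the condition ``$Q_i W \ne 0$'' is preserved under $Q$-module isomorphism.

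First I would let $\sigma : U \to W$ be an isomorphism of $Q$-modules, and let $d_U$, $d_W$ denote the respective diameters. By definition of $Q$-module isomorphism, we have $\sigma s = s \sigma$ on $U$ for every $s \in Q$, and in particular for every $s \in Q_i$ with $i \in \Z$. The central step is then the claim that for each $i$, the subspace $Q_i U = 0$ if and only if $Q_i W = 0$. Indeed, since $\sigma$ is a bijection intertwining the $Q$-action, it maps $Q_i U$ onto $Q_i W$: for any $s \in Q_i$ and $u \in U$ we have $\sigma(su) = s(\sigma u)$, so $\sigma(Q_i U) = Q_i(\sigma U) = Q_i W$ because $\sigma U = W$. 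As $\sigma$ is a vector space isomorphism, $Q_i U \ne 0$ if and only if $Q_i W \ne 0$.

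Applying this equivalence across all $i$ in the range $0 \le i \le D$ and taking the maximum, I obtain from Corollary \ref{cor:QW3} that
\[
d_U = \max\{i \mid 0 \le i \le D,\ Q_i U \ne 0\} = \max\{i \mid 0 \le i \le D,\ Q_i W \ne 0\} = d_W,
\]
which is the desired conclusion. I do not expect any serious obstacle here: the entire difficulty has already been absorbed into Corollary \ref{cor:QW3}, whose proof rests on Corollary \ref{cor:QW4} and Lemma \ref{lemma:QW2}. The only point requiring a small amount of care is the intertwining identity $\sigma(Q_i U) = Q_i W$, which uses the fact that $\sigma$ commutes with each element of $Q_i \subseteq Q$ and that $\sigma$ is surjective onto $W$; this is immediate from the definition of isomorphism of $Q$-modules given in Section \ref{sec:prelim}.
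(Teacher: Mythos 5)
Your proof is correct and follows exactly the paper's route: the paper's own proof is simply ``By Corollary \ref{cor:QW3},'' leaving implicit the intertwining fact $\sigma(Q_iU)=Q_iW$ that you spell out. You have merely made explicit the routine verification that a $Q$-module isomorphism preserves the condition $Q_iW\ne 0$, so there is nothing to add.
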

\begin{proof}
By Corollary   \ref{cor:QW3}.
\end{proof}

\begin{proposition}                              \label{prop:quasiiso3}
Let $U$ and $W$ denote irreducible $T$-modules.
Assume that there exists an isomorphism of $Q$-modules $\sigma: U \to W$.
Then
$
\sigma E_{r+i}^*U = E_{r^\prime+i}^* W
$
for $0 \le i \le d$. Here $d=d(U)=d(W)$ and $r=r(U)$,  $r^\prime = r(W)$.
\end{proposition}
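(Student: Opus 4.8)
The plan is to recover each individual layer $E_{r+i}^*U$ purely from the $\Z$-grading of $Q$, without reference to the dual idempotents themselves. This is forced on us: the $E_i^*$ lie in $T$ but generally not in $Q$, so $\sigma$ cannot be expected to intertwine them. However, $\sigma$ is an isomorphism of $Q$-modules, so it does commute with the action of every graded piece $Q_n \subseteq Q$, and that is all we will need.

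First I would record that $d := d(U) = d(W)$ by Lemma \ref{lemma:Qisodiam}. The key observation is an intersection identity. For $0 \le i \le d$, Lemma \ref{lemma:QW2} gives
\[
Q_i U = \sum_{\ell=i}^{d} E_{r+\ell}^*U, \qquad\qquad Q_{-(d-i)}U = \sum_{\ell=0}^{i} E_{r+\ell}^*U.
\]
Since $U = \sum_{\ell=0}^{d} E_{r+\ell}^*U$ is a (orthogonal) direct sum by \eqref{eq:Wsum}, any vector has a unique expression across the layers; a vector lying in the first sum has components only in layers $\ell \ge i$, and one lying in the second only in layers $\ell \le i$. Hence their intersection picks out exactly the common layer:
\[
E_{r+i}^*U = Q_i U \cap Q_{-(d-i)}U.
\]
Applying the identical reasoning to $W$ (with endpoint $r'$ and the same diameter $d$) yields $E_{r'+i}^*W = Q_i W \cap Q_{-(d-i)}W$.

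Now I would push everything through $\sigma$. Because $\sigma$ is an isomorphism of $Q$-modules, for each $n \in \Z$ the subspace $Q_n U$ is spanned by elements $su$ with $s \in Q_n$, and $\sigma(su) = s\sigma(u)$; since $\sigma(U)=W$, this gives $\sigma(Q_n U) = Q_n W$. Moreover $\sigma$ is a vector space isomorphism, so it preserves intersections of subspaces, $\sigma(A \cap B) = \sigma(A) \cap \sigma(B)$. Combining these facts,
\[
\sigma\big(E_{r+i}^*U\big) = \sigma\big(Q_i U \cap Q_{-(d-i)}U\big) = Q_i W \cap Q_{-(d-i)}W = E_{r'+i}^*W,
\]
which is the claim.

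The only substantive step is the intersection identity $E_{r+i}^*U = Q_i U \cap Q_{-(d-i)}U$; everything after it is formal. The conceptual hurdle to clear is precisely this: one must see that, although $\sigma$ need not respect the individual layers $E_j^*$, a single layer is cut out as the intersection of an "upper tail" $Q_iU$ and a "lower tail" $Q_{-(d-i)}U$, both of which are $Q$-submodules and hence $\sigma$-equivariant. Once that is in hand, the graded structure of $Q$ does all the work.
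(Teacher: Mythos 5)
Your argument is correct, but it takes a different route from the paper's. The paper proves the claim by one chain of equalities: using Lemma \ref{lemma:QW1} and Corollary \ref{cor:QW4} it writes $E_{r+i}^*U = Q_iE_r^*U = Q_iQ_{-d}U$, pushes $\sigma$ through the subspace $Q_iQ_{-d}$ of $Q$ to get $\sigma E_{r+i}^*U = Q_iQ_{-d}\sigma U = Q_iQ_{-d}W$, and then retraces the same two identities on $W$ to land on $E_{r'+i}^*W$. So where you cut out the layer $E_{r+i}^*U$ as the intersection of an upper and a lower tail, $Q_iU \cap Q_{-(d-i)}U$ (via Lemma \ref{lemma:QW2} and the directness of \eqref{eq:Wsum}), the paper manufactures it as the image $Q_iQ_{-d}U$ of a product of graded pieces applied to the whole module. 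The paper's version is slightly leaner: it needs only $\sigma(SU)=S\sigma(U)$ for subspaces $S\subseteq Q$, whereas you additionally invoke the fact that an injective linear map preserves intersections of subspaces (true, and correctly applied). In exchange, your intersection identity is a reusable characterization of the layers purely in terms of the grading, and it makes visible exactly why no knowledge of the individual $E_j^*$ is needed.

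One correction to your closing commentary: the tails $Q_iU$ and $Q_{-(d-i)}U$ are generally \emph{not} $Q$-submodules of $U$. By Proposition \ref{thm:irreducible} the $Q$-module $U$ is irreducible, so its only $Q$-submodules are $0$ and $U$; concretely, $L \in Q_{-1}$ maps $Q_iU$ into $Q_{i-1}U$, which is strictly larger in general. Fortunately your proof never uses submodule-closedness: the operative fact is only $\sigma(Q_nU)=Q_nW$, which you derive correctly from equivariance on the spanning vectors $su$ with $s \in Q_n$. So the slip is terminological rather than mathematical, and the proof stands.
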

\begin{proof}   
Observe that
$$
\sigma E_{r+i}^*U =  \sigma Q_{i}E_r^*U
                    = \sigma Q_{i}Q_{-d}U
                    = Q_{i}Q_{-d} \sigma  U
                    = Q_{i}Q_{-d} W
                    = Q_{i}E_{r^\prime}^*W
                   = E_{r^\prime+i}^*W.
$$
\end{proof}

\section{Quasi-isomorphisms of  $T$-modules}
\label{section:Qmodulesiso}

In this section we compare $Q$-module isomorphisms 
and $T$-module isomorphisms. To do this, we introduce
the notion of a quasi-isomorphism of $T$-modules.

\begin{definition}                              \label{def:quasiiso}
Let $U$ and $W$ denote irreducible $T$-modules.
By a {\de quasi-isomorphism of $T$-modules from $U$ to $W$}  
we mean a $\C$-linear bijection
$
\sigma: U \to W
$
such that on $U$,
\begin{equation}                \label{eq:quasiiso1}
\sigma L= L \sigma, \ \ \ \ \ \ \ \
\sigma F= F \sigma, \ \ \ \ \ \ \ \
\sigma R= R \sigma 
\end{equation}
and 
\begin{equation}                  \label{eq:quasiiso2}
\sigma E_i^* = E_{i+n}^* \sigma \ \ \ \ \ \ \ \  i \in \Z,
\end{equation}
where $n=r(W)-r(U)$.
\end{definition}

\begin{lemma}
Let $U$ and $W$ denote irreducible $T$-modules.
For a  $\C$-linear map $\sigma: U \to W$, the following are equivalent:
\begin{itemize}
\item[{\rm (i)}] $\sigma$ is a quasi-isomorphism of $T$-modules from $U$ to $W$;
\item[{\rm (ii)}] $\sigma^{-1}$ is a quasi-isomorphism of $T$-modules from
          $W$ to $U$.   
\end{itemize}
\end{lemma}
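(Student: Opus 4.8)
The plan is to show that the two defining conditions for a quasi-isomorphism are each self-inverting under the substitution $\sigma \mapsto \sigma^{-1}$, so that (i) holds precisely when (ii) does. First I would assume (i), so that $\sigma : U \to W$ is a $\C$-linear bijection satisfying \eqref{eq:quasiiso1} and \eqref{eq:quasiiso2} with $n = r(W) - r(U)$. Since $\sigma$ is a bijection, $\sigma^{-1} : W \to U$ exists and is $\C$-linear, so it remains only to verify that $\sigma^{-1}$ satisfies the two conditions with the roles of $U$ and $W$ interchanged and with $n$ replaced by $-n = r(U) - r(W)$.

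For the commutation relations \eqref{eq:quasiiso1}, I would take the equation $\sigma G = G \sigma$ for $G \in \{L, F, R\}$ (valid on $U$), multiply on the left and right by $\sigma^{-1}$, and obtain $G \sigma^{-1} = \sigma^{-1} G$ on $W$. The only subtlety is bookkeeping about domains: $L, F, R \in T$ act on the standard module $V$, and they map each of $U, W$ into itself (each being a $T$-module, hence a $Q$-module), so the compositions $G\sigma^{-1}$ and $\sigma^{-1} G$ are both well-defined maps $W \to U$, and the manipulation is legitimate. For the dual-idempotent condition \eqref{eq:quasiiso2}, from $\sigma E_i^* = E_{i+n}^* \sigma$ on $U$ I would similarly left- and right-multiply by $\sigma^{-1}$ to get $E_i^* \sigma^{-1} = \sigma^{-1} E_{i+n}^*$ on $W$. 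Reindexing by setting $i = j - n$ (equivalently $j = i+n$) turns this into $E_{j-n}^* \sigma^{-1} = \sigma^{-1} E_j^*$, i.e.\ $\sigma^{-1} E_j^* = E_{j + (-n)}^* \sigma^{-1}$ for all $j \in \Z$, which is exactly \eqref{eq:quasiiso2} for $\sigma^{-1}$ with shift $-n$. Since $-n = r(U) - r(W)$ is precisely the required shift for a quasi-isomorphism from $W$ to $U$, the conditions match.

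This establishes (i) $\Rightarrow$ (ii). For the reverse implication (ii) $\Rightarrow$ (i), I would note that $(\sigma^{-1})^{-1} = \sigma$ and that the roles of $U$ and $W$ are symmetric in Definition \ref{def:quasiiso}, so applying the already-proved implication to $\sigma^{-1}$ yields that $\sigma$ is a quasi-isomorphism from $U$ to $W$; thus the statement is symmetric and no separate argument is needed. There is no genuine obstacle here: the entire proof is a formal inversion of two operator identities together with a single reindexing, and the matching of the shift parameter $n \mapsto -n$ with the endpoint difference $r(U) - r(W)$ is the one point to state carefully.
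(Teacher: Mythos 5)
Your proposal is correct and matches the paper's approach: the paper's entire proof is ``Use Definition \ref{def:quasiiso},'' and your argument is exactly the routine verification that proof gestures at --- conjugating the commutation relations \eqref{eq:quasiiso1} by $\sigma^{-1}$ and reindexing \eqref{eq:quasiiso2} so the shift becomes $-n = r(U)-r(W)$. Your care about well-definedness (that $L,F,R,E_i^*$ preserve each of $U$ and $W$, since both are $T$-modules) is the right detail to check, and the symmetry argument for (ii) $\Rightarrow$ (i) is exactly as intended.
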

\begin{proof}
Use Definition \ref{def:quasiiso}.
\end{proof}

\begin{definition}                              \label{def:quasiiso2}
Irreducible $T$-modules $U$ and $W$ are called {\de quasi-isomorphic} 
whenever there exists a quasi-isomorphism of $T$-modules
from $U$ to $W$. 
\end{definition}

We make two observations.

\begin{lemma}                     \label{lemma:Tiso}
Let $U$ and $W$ denote irreducible $T$-modules with the same endpoint.
Then for a $\C$-linear map $\sigma: U \to W$
the following are equivalent:
\begin{itemize}
\item[{\rm (i)}] $\sigma$ is a quasi-isomorphism of $T$-modules from
            $U$ to $W$;   
\item[{\rm (ii)}] $\sigma$ is an isomorphism of $T$-modules from
            $U$ to $W$.
\end{itemize}
\end{lemma}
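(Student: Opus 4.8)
The plan is to prove the equivalence of (i) and (ii) in Lemma \ref{lemma:Tiso} by showing that, \emph{under the hypothesis that $U$ and $W$ have the same endpoint}, the defining conditions of a quasi-isomorphism (Definition \ref{def:quasiiso}) collapse exactly onto the defining conditions of a $T$-module isomorphism (the condition $(\sigma s - s\sigma)U=0$ for all $s \in S$ from Section \ref{sec:prelim}). The key simplification is that when $r(U)=r(W)$ we have $n=r(W)-r(U)=0$, so condition \eqref{eq:quasiiso2} becomes simply $\sigma E_i^* = E_i^* \sigma$ for all $i \in \Z$; that is, $\sigma$ commutes with each dual idempotent.

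First I would establish the direction (ii) $\Rightarrow$ (i). Suppose $\sigma$ is a $T$-module isomorphism. Since $L,F,R \in T$ (the first Lemma of Section 4), the relations \eqref{eq:quasiiso1} hold immediately. For \eqref{eq:quasiiso2}, I note that each $E_i^* \in M^* \subseteq T$, so $\sigma$ commutes with every $E_i^*$; since $n=0$, this is precisely \eqref{eq:quasiiso2}. Here one must also confirm that same-endpoint is consistent, but isomorphic $T$-modules automatically have the same endpoint (the remark following \eqref{eq:Wsum}), so no extra input is needed. Thus $\sigma$ is a quasi-isomorphism.

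The substantive direction is (i) $\Rightarrow$ (ii). Assume $\sigma$ is a quasi-isomorphism. Because $r(U)=r(W)$, condition \eqref{eq:quasiiso2} reads $\sigma E_i^* = E_i^* \sigma$, so $\sigma$ intertwines the generators $\{E_i^*\}_{i=0}^D$ of $M^*$; combined with \eqref{eq:quasiiso1}, $\sigma$ intertwines $L,F,R$ as well. The goal is to upgrade this to intertwining all of $T$. The natural route is to recall that $T$ is generated by $A$ and $\{E_i^*\}_{i=0}^D$, and that $A = L+F+R$ by Lemma \ref{lemma:AisLFR}. Hence $\sigma A = (\sigma L)+(\sigma F)+(\sigma R) = (L+F+R)\sigma = A\sigma$, so $\sigma$ commutes with $A$; it already commutes with each $E_i^*$. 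Since $\sigma$ commutes with a generating set of the algebra $T$, it commutes with every element of $T$, i.e. $(\sigma s - s\sigma)U=0$ for all $s\in T$. Together with the fact that $\sigma$ is a $\C$-linear bijection, this makes $\sigma$ a $T$-module isomorphism.

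The main obstacle to watch for is the logical gap between ``$\sigma$ commutes with a generating set'' and ``$\sigma$ commutes with the whole algebra.'' This is straightforward but should be stated carefully: the set of $t \in T$ with $\sigma t = t\sigma$ (as maps $U\to W$, interpreting the $T$-action appropriately on each side) is closed under sums and products, hence is a subalgebra containing the generators $A$ and $\{E_i^*\}_{i=0}^D$, and therefore equals $T$. I expect no deeper difficulty, since the same-endpoint hypothesis removes the only genuinely quasi (as opposed to honest) feature of the map, namely the shift $n$ in \eqref{eq:quasiiso2}; with $n=0$ the quasi-isomorphism conditions are literally the isomorphism conditions restricted to the generators of $T$.
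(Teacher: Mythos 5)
Your proof is correct and is essentially the paper's own argument: the paper's proof is the one-line ``Set $n=0$ in Definition \ref{def:quasiiso},'' and your write-up simply makes explicit what that entails, namely that with $n=0$ the quasi-isomorphism conditions say $\sigma$ commutes with $L,F,R$ (hence with $A=L+F+R$) and with each $E_i^*$, and that commuting with this generating set of $T$ extends to all of $T$ since the set of elements of $T$ intertwined by $\sigma$ is a subalgebra. Your care on that last generating-set step is exactly the detail the paper leaves implicit, so there is no substantive difference in approach.
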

\begin{proof}
Set $n=0$ in Definition  \ref{def:quasiiso}.
\end{proof}

\begin{corollary}                     \label{corr:Tiso}
For irreducible $T$-modules $U,W$ the following are equivalent:
\begin{itemize}
\item[{\rm (i)}] the $T$-modules $U,W$ are quasi-isomorphic 
                       and have the same endpoint;
\item[{\rm (ii)}] the $T$-modules $U,W$ are isomorphic.
\end{itemize}
\end{corollary}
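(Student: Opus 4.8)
The plan is to read this off almost immediately from Lemma \ref{lemma:Tiso}, whose hypothesis is precisely that $U$ and $W$ share an endpoint. Under that hypothesis Lemma \ref{lemma:Tiso} tells us that the quasi-isomorphisms $U \to W$ and the $T$-module isomorphisms $U \to W$ are exactly the same maps. So the whole task reduces to matching up the ``same endpoint'' clauses in the two statements: in (i) it is assumed outright, while in (ii) it must be extracted from the assumption that $U,W$ are isomorphic.

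For the implication (i) $\Rightarrow$ (ii), I would start from a quasi-isomorphism of $T$-modules $\sigma : U \to W$, which exists by Definition \ref{def:quasiiso2}. Since $U$ and $W$ are assumed to have the same endpoint, the hypothesis of Lemma \ref{lemma:Tiso} is met, and the equivalence there (i) $\Rightarrow$ (ii) promotes $\sigma$ to an isomorphism of $T$-modules. Hence $U$ and $W$ are isomorphic, which is (ii). This direction requires no new input beyond Lemma \ref{lemma:Tiso}.

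For (ii) $\Rightarrow$ (i), I would begin with an isomorphism of $T$-modules $\sigma : U \to W$. The one point that needs care is recovering the endpoint condition: here I would invoke the observation recorded just after \eqref{eq:Wsum}, that isomorphic irreducible $T$-modules have the same endpoint. With that, $U$ and $W$ again satisfy the hypothesis of Lemma \ref{lemma:Tiso}, so the equivalence there (ii) $\Rightarrow$ (i) shows that $\sigma$ is in fact a quasi-isomorphism of $T$-modules; thus $U,W$ are quasi-isomorphic and, by the remark just used, have the same endpoint, giving (i). The only substantive step, and the one I would flag as the potential stumbling block, is this appeal to the fact that isomorphic irreducible $T$-modules share an endpoint; everything else is a direct transcription of Lemma \ref{lemma:Tiso}.
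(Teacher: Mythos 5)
Your proposal is correct and matches the paper's proof, which likewise cites Lemma \ref{lemma:Tiso} together with the fact that isomorphic irreducible $T$-modules have the same endpoint; you have simply written out both directions in full. In particular, your flagged step---recovering the endpoint condition in (ii) $\Rightarrow$ (i) from the remark after \eqref{eq:Wsum}---is exactly the second ingredient the paper invokes.
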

\begin{proof}
By Lemma  \ref{lemma:Tiso} and since isomorphic irreducible $T$-modules
have the same endpoint.
\end{proof}

\begin{proposition}             \label{thm:nonisomorphic}
Let $U$ and $W$ denote irreducible $T$-modules.
Then for a $\C$-linear map $\sigma: U \to W$
the following are equivalent:
\begin{itemize}
\item[{\rm (i)}] $\sigma$ is an isomorphism of $Q$-modules from $U$ to $W$;
\item[{\rm (ii)}] $\sigma$ is a quasi-isomorphism of $T$-modules from
$U$ to $W$.
\end{itemize}            
\end{proposition}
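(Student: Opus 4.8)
The plan is to observe that both conditions (i) and (ii) include exactly the same commutation requirement with $L,F,R$, so that the whole content of the equivalence lies in the relation \eqref{eq:quasiiso2} involving the $E_i^*$. Indeed, since $Q$ is generated by $L,F,R$ (Definition \ref{def:Q}), a $\C$-linear bijection $\sigma:U\to W$ is an isomorphism of $Q$-modules if and only if $\sigma L=L\sigma$, $\sigma F=F\sigma$, $\sigma R=R\sigma$ on $U$, which is precisely \eqref{eq:quasiiso1}. This immediately gives (ii) $\Rightarrow$ (i): a quasi-isomorphism is by Definition \ref{def:quasiiso} a $\C$-linear bijection satisfying \eqref{eq:quasiiso1}, hence commutes with every element of $Q$ and so is an isomorphism of $Q$-modules.

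For (i) $\Rightarrow$ (ii), assume $\sigma:U\to W$ is an isomorphism of $Q$-modules. Then \eqref{eq:quasiiso1} holds as noted above, and it remains to establish \eqref{eq:quasiiso2} with $n=r(W)-r(U)$. First I would invoke Lemma \ref{lemma:Qisodiam} to conclude $d(U)=d(W)=:d$; write $r=r(U)$ and $r'=r(W)$, so $n=r'-r$. Proposition \ref{prop:quasiiso3} then locates the images of the eigenspaces precisely: $\sigma E_{r+i}^*U=E_{r'+i}^*W=E_{r+i+n}^*W$ for $0\le i\le d$. Conceptually, $\sigma$ carries the $i$-th graded piece $E_{r+i}^*U$ of $U$ onto the $n$-shifted corresponding piece of $W$.

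With this in hand, the remaining step is routine index bookkeeping using the orthogonal decomposition \eqref{eq:Wsum}. Given $u\in U$, write $u=\sum_{i=0}^d E_{r+i}^*u$, so that $\sigma E_{r+i}^*u\in E_{r+i+n}^*W$. For any $k\in\Z$ one compares $E_k^*\sigma u$ with $\sigma E_{k-n}^*u$: both select the single summand $\sigma E_{r+i}^*u$ with $r+i+n=k$ when such $i$ lies in $\{0,\dots,d\}$, and both vanish otherwise, using that $E_j^*U=0$ unless $r\le j\le r+d$. Hence $E_k^*\sigma=\sigma E_{k-n}^*$ on $U$, and replacing $k$ by $i+n$ yields \eqref{eq:quasiiso2}. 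The only place a genuine argument is needed is the location of the eigenspace images, which is exactly the content of Proposition \ref{prop:quasiiso3}; I expect that to be the crux, while the index comparison is purely mechanical.
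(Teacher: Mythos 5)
Your proposal is correct and follows essentially the same route as the paper: the direction (ii) $\Rightarrow$ (i) comes from \eqref{eq:quasiiso1} together with the fact that $Q$ is generated by $L,F,R$, and the direction (i) $\Rightarrow$ (ii) rests on Proposition \ref{prop:quasiiso3} locating the images $\sigma E_{r+i}^*U = E_{r'+i}^*W$. The only difference is that you spell out the index bookkeeping that converts this eigenspace statement into \eqref{eq:quasiiso2}, which the paper leaves implicit; your verification of that step is accurate.
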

\begin{proof}
${\rm (i)} \Rightarrow {\rm (ii)}$ 
We check that $\sigma$ satisfies the conditions in Definition  \ref{def:quasiiso}.
The map $\sigma$ satisfies  \eqref{eq:quasiiso1} since 
$\sigma$ is an isomorphism of $Q$-modules.
The map $\sigma$ satisfies  \eqref{eq:quasiiso2}  by Proposition \ref{prop:quasiiso3}.
\smallskip

\noindent
${\rm (ii)} \Rightarrow {\rm (i)}$
Use \eqref{eq:quasiiso1}.
\end{proof}

\begin{corollary}             \label{cor:nonisomorphic}
For irreducible $T$-modules $U,W$ the following are equivalent:
\begin{itemize}
\item[{\rm (i)}] the $Q$-modules $U,W$ are isomorphic;
\item[{\rm (ii)}] the $T$-modules $U,W$ are quasi-isomorphic.
\end{itemize}            
\end{corollary}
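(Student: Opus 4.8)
The plan is to derive this corollary directly from Proposition \ref{thm:nonisomorphic}, which already establishes the pointwise equivalence between $Q$-module isomorphisms and $T$-module quasi-isomorphisms $\sigma: U \to W$. Since both conditions (i) and (ii) in the corollary are existential statements about the very same class of $\C$-linear maps $\sigma: U \to W$, the corollary should follow by a purely logical ``existentialization'' of that proposition, together with the definitions of isomorphic $Q$-modules and quasi-isomorphic $T$-modules.

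First I would unwind the two definitions so that the existential quantifiers are made explicit. Condition (i) asserts that the $Q$-modules $U,W$ are isomorphic, which by the definition of isomorphic $S$-modules in Section \ref{sec:prelim} (applied with $S=Q$) means precisely that there exists an isomorphism of $Q$-modules $\sigma: U \to W$. Condition (ii) asserts that the $T$-modules $U,W$ are quasi-isomorphic, which by Definition \ref{def:quasiiso2} means precisely that there exists a quasi-isomorphism of $T$-modules $\sigma: U \to W$.

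For the implication (i) $\Rightarrow$ (ii), I would take an isomorphism of $Q$-modules $\sigma: U \to W$ guaranteed by (i) and apply the implication (i) $\Rightarrow$ (ii) of Proposition \ref{thm:nonisomorphic} to conclude that this same $\sigma$ is a quasi-isomorphism of $T$-modules; its existence is exactly (ii). For the reverse implication, I would take a quasi-isomorphism $\sigma: U \to W$ furnished by (ii) and apply (ii) $\Rightarrow$ (i) of Proposition \ref{thm:nonisomorphic} to see that this $\sigma$ is an isomorphism of $Q$-modules, which yields (i).

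I expect essentially no obstacle at this stage, since all the substance has already been absorbed into Proposition \ref{thm:nonisomorphic} (and, through it, into Proposition \ref{prop:quasiiso3}, which supplies the compatibility \eqref{eq:quasiiso2} between $\sigma$ and the dual idempotents $E_i^*$). The only point requiring genuine care is that the map $\sigma$ witnessing one condition must be the \emph{same} map witnessing the other, so that the two existential quantifiers line up; this is precisely why the pointwise form of Proposition \ref{thm:nonisomorphic}, rather than a mere statement about the existence of \emph{some} isomorphism, is the right tool to invoke.
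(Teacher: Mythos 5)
Your proposal is correct and follows exactly the paper's route: the paper proves this corollary with the single line ``By Proposition \ref{thm:nonisomorphic},'' and your unwinding of the two existential quantifiers (via the definition of isomorphic $S$-modules with $S=Q$ and Definition \ref{def:quasiiso2}) is just the spelled-out version of that same argument. Your closing remark that the same map $\sigma$ must witness both conditions is precisely why the pointwise proposition suffices, so nothing is missing.
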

\begin{proof}
By Proposition  \ref{thm:nonisomorphic}.
\end{proof}

\section{Comparing $Q$ and $T$}

Recall that $Q$ is a subalgebra of $T$. 
In this section we consider 
when are these two algebras equal.

\begin{theorem}            \label{thm:QisT}
The following {\rm (i)}--{\rm (iv)} are equivalent:
\begin{itemize}
\item[{\rm (i)}] $Q\not=T$;
\item[{\rm (ii)}] $Q \subsetneq T$;
\item[{\rm (iii)}] there exists a pair of nonisomorphic irreducible $T$-modules 
            that are isomorphic as $Q$-modules;
\item[{\rm (iv)}] there exists a pair of quasi-isomorphic irreducible $T$-modules 
            that have different endpoints.
\end{itemize}            
\end{theorem}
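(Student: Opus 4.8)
The plan is to split the four-way equivalence into three manageable pieces: the trivial equivalence (i) $\Leftrightarrow$ (ii), a dimension-counting argument for (i) $\Leftrightarrow$ (iii), and a translation via Section \ref{section:Qmodulesiso} for (iii) $\Leftrightarrow$ (iv).

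First I would dispose of (i) $\Leftrightarrow$ (ii). Since $Q$ is by Definition \ref{def:Q} a subalgebra of $T$, we have the inclusion $Q \subseteq T$; hence $Q \ne T$ is the same statement as $Q \subsetneq T$.

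The heart of the argument is (i) $\Leftrightarrow$ (iii), which I would prove by comparing $\dim Q$ with $\dim T$. Lemma \ref{lem:dimQ} gives $\dim Q = \sum_{\mu \in Q^\vee} d_\mu^2$, while Lemma \ref{lemma:dimT} gives $\dim T = \sum_{\mu \in Q^\vee} m_\mu d_\mu^2$, where each multiplicity $m_\mu$ is a positive integer by the remark following \eqref{eq:mmu}. Subtracting,
\[
\dim T - \dim Q = \sum_{\mu \in Q^\vee} (m_\mu - 1)\, d_\mu^2 .
\]
Because $Q$ is a subspace of $T$ the left-hand side is nonnegative, and since every term $(m_\mu - 1) d_\mu^2$ is nonnegative, the sum vanishes if and only if $m_\mu = 1$ for all $\mu \in Q^\vee$. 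Thus $Q \ne T$ holds if and only if $m_\mu \ge 2$ for some $\mu$. By the definition \eqref{eq:mmu} of $m_\mu$, the inequality $m_\mu \ge 2$ says that two distinct types $\lambda, \lambda^\prime \in T^\vee$ satisfy $\psi(\lambda) = \psi(\lambda^\prime) = \mu$; equivalently, there exist nonisomorphic irreducible $T$-modules whose restrictions to $Q$ are isomorphic $Q$-modules. This is precisely condition (iii), so (i) $\Leftrightarrow$ (iii).

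Finally, for (iii) $\Leftrightarrow$ (iv) I would invoke the two corollaries of Section \ref{section:Qmodulesiso}. By Corollary \ref{cor:nonisomorphic}, a pair of irreducible $T$-modules is isomorphic as $Q$-modules if and only if it is quasi-isomorphic as $T$-modules; and by Corollary \ref{corr:Tiso}, quasi-isomorphic irreducible $T$-modules are isomorphic exactly when they share an endpoint. Hence a nonisomorphic pair that is isomorphic over $Q$ is the same as a quasi-isomorphic pair forced to have different endpoints, giving (iii) $\Leftrightarrow$ (iv). The one point deserving care---and the main obstacle---is the dimension step: the entire machinery of Section \ref{section:Qmodules} (the surjection $\psi$, the multiplicities $m_\mu$, and Lemma \ref{lemma:dimT}) exists so that $\dim T - \dim Q$ becomes a sum of nonnegative terms vanishing exactly when restriction is injective on types, and the conclusion ``$Q \ne T$ iff some $m_\mu \ge 2$'' relies essentially on $m_\mu \ge 1$ and $d_\mu \ge 1$.
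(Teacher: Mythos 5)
Your proof is correct and takes essentially the same route as the paper: the trivial inclusion for (i) $\Leftrightarrow$ (ii), the dimension comparison of \eqref{eq:dimQ} against \eqref{eq:dimTT} for the middle equivalence, and Corollaries \ref{corr:Tiso} and \ref{cor:nonisomorphic} for (iii) $\Leftrightarrow$ (iv). The only difference is expository: you spell out the subtraction $\dim T - \dim Q = \sum_{\mu \in Q^\vee} (m_\mu - 1) d_\mu^2$ and the role of $m_\mu \ge 1$, $d_\mu \ge 1$, which the paper compresses into the instruction to ``compare'' its two dimension formulas.
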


\begin{proof}
${\rm (i)} \Leftrightarrow {\rm (ii)}$
By construction $Q \subseteq T$.

\noindent
${\rm (ii)} \Leftrightarrow {\rm (iii)}$
Assertion (iii) means that there exists  $\mu \in Q^\vee$ such that $m_\mu>1$.
In this light, compare   \eqref{eq:dimQ} and \eqref{eq:dimTT}.

\noindent
${\rm (iii)} \Leftrightarrow {\rm (iv)}$
By Corollaries    \ref{corr:Tiso}  and \ref{cor:nonisomorphic}.
\end{proof}

\section{Thin irreducible $T$-modules}

In Sections \ref{section:Qmodules}--\ref{section:Qmodulesiso}
we considered the irreducible $T$-modules. In this section we consider
a special type of irreducible $T$-module, said to be thin.

\begin{definition}                              \label{def:thinmodules}
An irreducible  $T$-module $W$ is  called {\de thin}  whenever
$\dim \, E_i^*W \le 1$ for $0 \le i \le D$. 
\end{definition}

\begin{lemma}    \label{lemma:thin1}
Let $W$ denote a thin irreducible $T$-module with endpoint $r$ 
and diameter $d$. Then 
\begin{equation}              \label{eq:thin1}
R^iE_r^*W = E_{r+i}^*W  \qquad \qquad  (0\le i\le d).
\end{equation}
\end{lemma}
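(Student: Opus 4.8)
The plan is to prove the identity $R^i E_r^* W = E_{r+i}^* W$ for $0 \le i \le d$ by induction on $i$, exploiting the thinness hypothesis to control dimensions. The base case $i=0$ is immediate. For the inductive step, I would assume $R^i E_r^* W = E_{r+i}^* W$ and show $R^{i+1} E_r^* W = E_{r+i+1}^* W$.

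First I would establish the inclusion $R^{i+1} E_r^* W \subseteq E_{r+i+1}^* W$. This follows from Lemma~\ref{lem:LFRsubspace}, which tells us $R$ raises the $E_j^*$-grading by one: applying $R$ to $E_{r+i}^* W \subseteq E_{r+i}^* V$ lands in $E_{r+i+1}^* V$, so $R^{i+1} E_r^* W = R\,(R^i E_r^* W) = R\,E_{r+i}^* W \subseteq E_{r+i+1}^* W$ using the inductive hypothesis.

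The key point is the reverse inclusion, and here the thinness hypothesis does the work. Since $W$ is thin, each $E_{r+\ell}^* W$ has dimension at most $1$, and since $r \le r+\ell \le r+d$ these subspaces are all nonzero (by the characterization $E_j^* W \ne 0 \iff r \le j \le r+d$ recalled before \eqref{eq:Wsum}), hence each has dimension exactly $1$. So it suffices to show $R^{i+1} E_r^* W \ne 0$, i.e. that $R E_{r+i}^* W \ne 0$ for $0 \le i \le d-1$; a nonzero subspace of a one-dimensional space is the whole space. To see this nonvanishing, I would invoke Lemma~\ref{lemma:QW1} (or equivalently Lemma~\ref{lemma:QW2}): we have $Q_1 E_{r+i}^* W = E_{r+i+1}^* W \ne 0$, so $Q_1$ acts nontrivially from $E_{r+i}^* W$ to $E_{r+i+1}^* W$. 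The remaining task is to upgrade this from ``some element of $Q_1$ acts nontrivially'' to ``$R$ itself acts nontrivially.'' Since $Q_1$ is spanned by products of $L, F, R$ that shift the grading by $+1$, and thinness forces each graded piece to be one-dimensional, one argues that the only grading-raising generator available is $R$; more carefully, any element of $Q_1$ restricted to the thin module is a scalar multiple of the action of $R$ on each one-dimensional layer, so if $Q_1$ acts nontrivially then $R$ must too.

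The main obstacle I anticipate is precisely this last step: justifying rigorously that $R E_{r+i}^* W \ne 0$ rather than merely $Q_1 E_{r+i}^* W \ne 0$. The clean way is to note that on a thin module, because $\dim E_{r+i}^* W = \dim E_{r+i+1}^* W = 1$, the map $Q_1 \colon E_{r+i}^* W \to E_{r+i+1}^* W$ is determined by how each monomial in $L,F,R$ acts. Any monomial raising the grade by $+1$ must contain exactly one more factor of $R$ than of $L$; by repeatedly pushing $L$ and $F$ factors through using Lemma~\ref{lemma:pushEi} and collapsing consecutive same-layer actions, such a monomial reduces on the one-dimensional layers to a scalar times a single application of $R$. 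Hence the nonvanishing of $Q_1 E_{r+i}^* W$ forces $R E_{r+i}^* W \ne 0$, completing the induction.
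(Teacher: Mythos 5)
Your proof is correct, but it takes a genuinely different route from the paper's. The paper disposes of this lemma by citation: it is proved ``similar to the proof of \cite[Lemma 2.7]{Collins}'', i.e., by the standard submodule argument: one shows $W' = \sum_{i=0}^{d} R^i E_r^* W$ is invariant under $L$, $F$, $R$ and the dual idempotents --- thinness supplies the $L$- and $F$-invariance, since any nonzero $R^iE_r^*W$ must fill its one-dimensional layer $E_{r+i}^*W$ --- whence $W'$ is a nonzero $T$-submodule of the irreducible module $W$, so $W'=W$, and comparing graded components gives \eqref{eq:thin1}. You instead induct on $i$ and outsource the key nonvanishing $RE_{r+i}^*W\ne 0$ to the quantum-algebra machinery of Sections 5--7: Lemma \ref{lemma:QW1} gives $Q_1E_{r+i}^*W=E_{r+i+1}^*W\ne 0$, and you then claim that on a thin module every element of $Q_1$ acts on each one-dimensional layer as a scalar multiple of $R$. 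That claim is true, but your justification (``pushing $L$ and $F$ factors through'') is not quite the right mechanism; the clean argument is a crossing count. Note first that $Q_1$ is spanned by words in $L,F,R$ of net degree $+1$, by homogeneity of the generators and directness of the sum in Lemma \ref{lemma:Tsum}. Choose $0\ne w_j\in E_{r+j}^*W$ for $0\le j\le d$ and write $Rw_i=\rho_i w_{i+1}$. A word of net degree $+1$ applied to $w_i$ traces a path through the layers from level $i$ to level $i+1$; either the path exits $[r,r+d]$ and the word kills $w_i$, or the path crosses the boundary between levels $i$ and $i+1$ upward at least once, and each such up-crossing is an application of $R$ to the one-dimensional space $E_{r+i}^*W$, contributing the factor $\rho_i$ to the resulting scalar. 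Hence if $\rho_i=0$ then all of $Q_1$ annihilates $E_{r+i}^*W$, contradicting $Q_1E_{r+i}^*W\ne 0$; so $RE_{r+i}^*W\ne 0$ and your induction closes. As for what each approach buys: the Collins-style proof is shorter and self-contained, using only irreducibility of $W$ as a $T$-module, while yours leans on Lemma \ref{lemma:QW1} (hence on irreducibility of $W$ as a $Q$-module, Proposition \ref{thm:irreducible}) --- a heavier toolkit, but it isolates exactly where thinness enters, namely in upgrading surjectivity of $Q_1$ between consecutive layers to surjectivity of $R$ itself.
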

\begin{proof}
Similar to the proof of \cite[Lemma 2.7]{Collins}.
\end{proof}

\begin{lemma}    \label{lemma:thin2}
Let $W$ denote a thin irreducible $T$-module with endpoint $r$ 
and diameter $d$. Pick  a nonzero  $u \in E_r^*W$. Then: 
\begin{itemize}
\item[{\rm (i)}]  for $0 \le i  \le d$, $R^i u$ is a basis for $E_{r+i}^*W$;
\item[{\rm (ii)}] the vectors $\{R^i u\}_{i=0}^d$ form a basis for $W$.
\end{itemize}            
\end{lemma}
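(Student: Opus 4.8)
The plan is to leverage the thin hypothesis together with Lemma \ref{lemma:thin1}, which already does the essential work. Recall that $W$ is an orthogonal direct sum of the nonzero subspaces $E_r^*W, E_{r+1}^*W, \dots, E_{r+d}^*W$ by \eqref{eq:Wsum}, and that thinness means $\dim E_{r+i}^*W \le 1$ for each $i$. Since each of these $d+1$ subspaces is nonzero (as $E_j^*W \ne 0$ precisely when $r \le j \le r+d$), thinness forces $\dim E_{r+i}^*W = 1$ for $0 \le i \le d$. This is the observation that drives both parts.

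For part (i), I would fix a nonzero $u \in E_r^*W$ and argue by induction, or simply invoke Lemma \ref{lemma:thin1} directly. By \eqref{eq:thin1} we have $R^i E_r^*W = E_{r+i}^*W$ for $0 \le i \le d$. Since $u$ spans the one-dimensional space $E_r^*W$, it follows that $R^i u$ spans $R^i E_r^* W = E_{r+i}^*W$. Because $E_{r+i}^*W$ is one-dimensional, to conclude that $R^i u$ is a basis it suffices to check that $R^i u \ne 0$; but $R^i u$ spans a one-dimensional space, so it is automatically nonzero. This gives part (i).

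For part (ii), I would combine part (i) with the orthogonal decomposition \eqref{eq:Wsum}. Each vector $R^i u$ lies in $E_{r+i}^*W$ and is a basis for it, and these subspaces are pairwise orthogonal (hence the vectors are linearly independent). The number of vectors is $d+1$, which matches $\dim W = \sum_{i=0}^d \dim E_{r+i}^*W = d+1$. A spanning set of the correct size consisting of linearly independent vectors is a basis, so $\{R^i u\}_{i=0}^d$ is a basis for $W$.

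I do not anticipate a serious obstacle here, since Lemma \ref{lemma:thin1} supplies the key containment $R^i E_r^*W = E_{r+i}^*W$. The only point requiring a little care is confirming that every $E_{r+i}^*W$ with $0 \le i \le d$ is genuinely nonzero (so that thinness yields dimension exactly one), which follows from the characterization of the diameter stated just before \eqref{eq:Wsum}; this ensures the basis vectors are nonzero and that their count equals $\dim W$.
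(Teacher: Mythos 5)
Your proposal is correct and follows essentially the same route as the paper: part (i) comes from Lemma \ref{lemma:thin1} together with the fact that $\dim E_{r+i}^*W = 1$ for $0 \le i \le d$, and part (ii) follows from (i) and the orthogonal decomposition \eqref{eq:Wsum}. Your write-up merely spells out the details (nonvanishing of $R^i u$, linear independence via orthogonality, dimension count) that the paper's terse proof leaves implicit.
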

\begin{proof}
(i)  By Lemma \ref{lemma:thin1} and since $\dim \, E_{r+i}^*W = 1$ for  $0 \le i  \le d$.
\smallskip

\noindent
(ii)  Use (i) and \eqref{eq:Wsum}.
\end{proof}

\begin{definition}                              \label{def:thinbasis}
The basis in Lemma   \ref{lemma:thin2}(ii) is called \emph{standard}.
\end{definition}

\begin{lemma}    \label{lemma:thin3}
Let $W$ denote a thin irreducible $T$-module with  diameter $d$. 
Let $\{v_i\}_{i=0}^d$ denote a standard basis for $W$.
Then  $Rv_i=v_{i+1}$ $(0 \le i \le d-1)$ and $Rv_d=0$.
\end{lemma}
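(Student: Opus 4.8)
The plan is to prove the statement for a standard basis $\{v_i\}_{i=0}^d$ of a thin irreducible $T$-module $W$ with diameter $d$ by unwinding the definition of "standard" from Definition \ref{def:thinbasis} and Lemma \ref{lemma:thin2}(ii). By that definition, a standard basis arises from a choice of nonzero $u \in E_r^*W$ via $v_i = R^i u$ for $0 \le i \le d$, where $r$ is the endpoint of $W$. The identity $Rv_i = v_{i+1}$ for $0 \le i \le d-1$ is then essentially immediate: it just says $R(R^i u) = R^{i+1}u$, which is the definition of $v_{i+1}$.

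The only real content is the boundary claim $Rv_d = 0$. First I would note that $v_d = R^d u$ lies in $E_{r+d}^*W$ by Lemma \ref{lemma:thin1} (or directly by Lemma \ref{lemma:thin2}(i)), since $v_d$ spans $E_{r+d}^*W$. The key observation is that $R$ raises the grading by one: by Lemma \ref{lemma:pushEi} (or by Lemma \ref{lem:LFRsubspace}), $R$ maps $E_{r+d}^*V$ into $E_{r+d+1}^*V$. Therefore $Rv_d \in E_{r+d+1}^*W$. But $r+d$ is the largest index $i$ with $E_i^*W \ne 0$ (by the definition of endpoint and diameter in Section \ref{sec:subconstalgebra}, together with \eqref{eq:Wsum}), so $E_{r+d+1}^*W = 0$. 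Hence $Rv_d = 0$.

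The main obstacle, such as it is, is simply making sure the indexing and the grading bookkeeping line up: one must confirm that $v_d$ genuinely sits in the top piece $E_{r+d}^*W$ rather than spreading across several dual-idempotent subspaces, which is guaranteed precisely because $W$ is thin and $\{R^i u\}_{i=0}^d$ is the standard basis with $R^i u$ spanning $E_{r+i}^*W$ by Lemma \ref{lemma:thin2}(i). Once that is in hand, the fact that $R$ pushes $E_{r+d}^*W$ into the zero subspace $E_{r+d+1}^*W$ finishes the argument with no computation. I expect the proof to be short, invoking Lemma \ref{lemma:thin2}, the action of $R$ from Lemma \ref{lem:LFRsubspace}, and the maximality of $r+d$ among the indices $i$ with $E_i^*W \ne 0$.
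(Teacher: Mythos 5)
Your proof is correct and is essentially the paper's own argument: the paper's proof reads simply ``By construction,'' and your write-up is exactly what that construction unpacks to, namely $Rv_i = R(R^i u) = R^{i+1}u = v_{i+1}$ for $0 \le i \le d-1$, while $Rv_d = 0$ because $v_d$ spans $E_{r+d}^*W$ by Lemma \ref{lemma:thin2}(i), $R$ raises the dual-idempotent index by Lemma \ref{lem:LFRsubspace}, and $E_{r+d+1}^*W = 0$ by the maximality of $r+d$ (with the case $r+d=D$ covered by \eqref{eq:Eiis0}). Your bookkeeping remark that $Rv_d$ lands in $E_{r+d+1}^*V \cap W = E_{r+d+1}^*W$, using that $W$ is a $T$-module, is precisely the right point to check, and nothing further is needed.
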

\begin{proof}
By construction.
\end{proof}

\begin{lemma}    \label{lemma:thin4}
Let $W$ denote a thin irreducible $T$-module with  diameter $d$. 
Then there exist scalars $\{a_i(W)\}_{i=0}^d$, $\{x_i(W)\}_{i=1}^d$
in $\R$ that satisfy the following. 
For any standard basis $\{v_i\}_{i=0}^d$ of $W$,
\begin{align}              
Fv_i &= a_i(W)v_i  \qquad \qquad  (0\le i\le d),   \label{eq:thin4a}\\
Lv_i &= x_i(W)v_{i-1}  \qquad \qquad  (1\le i\le d).   \label{eq:thin4b}
\end{align}
\end{lemma}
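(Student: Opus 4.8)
The plan is to fix a standard basis $\{v_i\}_{i=0}^d$ of $W$, first produce the scalars together with the stated action relations, then verify that the scalars are real, and finally show that they are independent of the chosen standard basis (so that they are genuine invariants $a_i(W)$, $x_i(W)$ of $W$).

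First I would establish existence and the action. Recall from Lemma \ref{lemma:thin2}(i) that $v_i$ spans the one-dimensional space $E_{r+i}^*W$. Since $F\in T$ and $W$ is a $T$-module, $Fv_i\in W$; and by Lemma \ref{lem:LFRsubspace} we have $FE_{r+i}^*V\subseteq E_{r+i}^*V$, so $Fv_i\in E_{r+i}^*V\cap W=E_{r+i}^*W=\Span\{v_i\}$. Hence $Fv_i=a_i(W)v_i$ for a unique scalar. The same reasoning with $LE_{r+i}^*V\subseteq E_{r+i-1}^*V$ forces $Lv_i\in E_{r+i-1}^*W=\Span\{v_{i-1}\}$ for $1\le i\le d$, giving $Lv_i=x_i(W)v_{i-1}$; for $i=0$ one has $Lv_0\in E_{r-1}^*W=0$, consistent with the range $1\le i\le d$. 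This establishes \eqref{eq:thin4a} and \eqref{eq:thin4b} with the required uniqueness.

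Next I would check reality. By Lemma \ref{lem:LRFt} the matrix $F$ is real with $F^t=F$, so $F$ is self-adjoint for the Hermitian form $\langle\,,\,\rangle$; pairing $Fv_i=a_iv_i$ against $v_i$ and using $\langle v_i,v_i\rangle>0$ gives $a_i=\overline{a_i}$, so $a_i(W)\in\R$. For the $x_i$ I would exploit that $L,R$ are real with $L^t=R$, so $R$ is the adjoint of $L$. Then $x_i\langle v_{i-1},v_{i-1}\rangle=\langle Lv_i,v_{i-1}\rangle=\langle v_i,Rv_{i-1}\rangle=\langle v_i,v_i\rangle$, where the last equality uses $Rv_{i-1}=v_i$ from Lemma \ref{lemma:thin3}. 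Hence $x_i(W)=\langle v_i,v_i\rangle/\langle v_{i-1},v_{i-1}\rangle>0$, which is in particular real.

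Finally I would show the scalars are intrinsic. Since $E_r^*W$ is one-dimensional, any two standard bases arise from $u'=cu$ with $c\in\C$ nonzero, and then $v_i'=R^iu'=cv_i$ for all $i$; substituting $v_i'=cv_i$ into $Fv_i=a_iv_i$ and $Lv_i=x_iv_{i-1}$ and cancelling $c$ shows the very same scalars govern the action on $\{v_i'\}$. Thus $a_i(W)$ and $x_i(W)$ depend only on $W$. I expect the one genuinely nontrivial point to be the reality of the $x_i$: unlike $F$, the raising and lowering operators are not individually self-adjoint, so the argument must pair $L$ with its adjoint $R$ and invoke $Rv_{i-1}=v_i$ to reduce $x_i$ to a ratio of positive squared norms.
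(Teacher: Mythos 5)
Your proposal is correct and follows the same route as the paper, whose entire proof is the one-line citation ``By Lemma \ref{lem:LFRsubspace} and since $F,L \in T$'': the existence of the scalars comes from $F,L$ preserving $W$ together with $FE_{r+i}^*V\subseteq E_{r+i}^*V$, $LE_{r+i}^*V\subseteq E_{r+i-1}^*V$ and $\dim E_{r+i}^*W=1$. The details you add beyond that citation --- reality of $a_i(W)$ from $F^t=F$, reality (indeed positivity) of $x_i(W)$ from $L^t=R$ and $Rv_{i-1}=v_i$, and independence of the choice of standard basis since any two differ by a nonzero scalar --- are exactly the verifications the paper leaves implicit, and they are carried out correctly.
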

\begin{proof}
By Lemma \ref{lem:LFRsubspace} and since $F,L \in T$.
\end{proof}

\begin{proposition}    \label{prop:thin5}
Let $U,W$ denote  thin irreducible $T$-modules. 
Then the following are equivalent:
\begin{itemize}
\item[{\rm (i)}]  $U$ and $W$ are quasi-isomorphic;
\item[{\rm (ii)}]  $U$ and $W$  have the same diameter $d$  and    
\begin{align*}
a_i(U) &= a_i(W)   \qquad \qquad  (0\le i\le d), \\  
x_i(U) &= x_i(W)   \qquad \qquad  (1\le i\le d). 
\end{align*}
\end{itemize}            
\end{proposition}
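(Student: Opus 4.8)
The plan is to prove the two directions separately, using the standard bases of $U$ and $W$ provided by Lemma \ref{lemma:thin2}(ii) and the explicit action of $R,F,L$ on such bases from Lemmas \ref{lemma:thin3} and \ref{lemma:thin4}. The key observation is that for a thin module, a quasi-isomorphism is completely pinned down (up to scalar) by where it sends a standard basis, because the standard basis is built from a single vector $u \in E_r^*W$ by applying $R$ repeatedly. So the whole argument reduces to checking when the map $v_i(U) \mapsto v_i(W)$ intertwines $L,F,R$.

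First I would prove (ii) $\Rightarrow$ (i). Assume $U$ and $W$ have common diameter $d$ and that $a_i(U)=a_i(W)$ and $x_i(U)=x_i(W)$ for all relevant $i$. Fix standard bases $\{u_i\}_{i=0}^d$ of $U$ and $\{w_i\}_{i=0}^d$ of $W$, and define $\sigma: U \to W$ by $\sigma u_i = w_i$ for $0 \le i \le d$; this is a $\C$-linear bijection since both are bases. I would then verify the three relations in \eqref{eq:quasiiso1} directly on the basis. The relation $\sigma R = R\sigma$ follows from Lemma \ref{lemma:thin3}, which says $R$ acts the same way (shift up, kill the top) on both standard bases. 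The relation $\sigma F = F\sigma$ follows from \eqref{eq:thin4a} together with the hypothesis $a_i(U)=a_i(W)$, and $\sigma L = L\sigma$ follows from \eqref{eq:thin4b} together with $x_i(U)=x_i(W)$. Finally \eqref{eq:quasiiso2} holds because $u_i$ spans $E_{r+i}^*U$ and $w_i$ spans $E_{r'+i}^*W$ (by Lemma \ref{lemma:thin2}(i)), so $\sigma$ sends $E_{r+i}^*U$ to $E_{r'+i}^*W$, which with $n=r'-r$ is exactly the required intertwining of the dual idempotents. Hence $\sigma$ is a quasi-isomorphism.

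For (i) $\Rightarrow$ (ii), suppose $\sigma: U \to W$ is a quasi-isomorphism. Since the $Q$-modules $U,W$ are isomorphic via $\sigma$ (by Proposition \ref{thm:nonisomorphic}), Lemma \ref{lemma:Qisodiam} gives $d(U)=d(W)=d$. Next I would use \eqref{eq:quasiiso2} to see that $\sigma$ maps $E_r^*U$ onto $E_{r'}^*W$, so if $\{u_i\}$ is a standard basis of $U$ with $u_0 = u \in E_r^*U$, then $\sigma u_0$ is a nonzero vector in $E_{r'}^*W$. Because $\sigma R = R\sigma$, applying $\sigma$ to $u_i = R^i u_0$ (Lemma \ref{lemma:thin1}/\ref{lemma:thin2}) gives $\sigma u_i = R^i(\sigma u_0)$, so the images $\{\sigma u_i\}_{i=0}^d$ form a standard basis of $W$. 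Now comparing $\sigma F u_i = F \sigma u_i$ using \eqref{eq:thin4a} on each side yields $a_i(U)(\sigma u_i) = a_i(W)(\sigma u_i)$, and since $\sigma u_i \ne 0$ we get $a_i(U)=a_i(W)$; similarly comparing $\sigma L u_i = L \sigma u_i$ via \eqref{eq:thin4b} gives $x_i(U)=x_i(W)$.

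The main obstacle I anticipate is the subtlety in (i) $\Rightarrow$ (ii) that the scalars $a_i(W),x_i(W)$ in Lemma \ref{lemma:thin4} are asserted to be independent of the choice of standard basis; I need $\{\sigma u_i\}$ to itself be a \emph{standard} basis of $W$ (not merely some basis) so that its $F,L$-action is governed by the same scalars $a_i(W),x_i(W)$. This is why the step showing $\sigma u_0 \in E_{r'}^*W$ and $\sigma u_i = R^i(\sigma u_0)$ is essential rather than cosmetic: it certifies that $\{\sigma u_i\}$ is standard, after which the well-definedness clause of Lemma \ref{lemma:thin4} lets me read off $a_i(W),x_i(W)$ from it and match them against $a_i(U),x_i(U)$. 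The remaining computations are routine once this identification is in place.
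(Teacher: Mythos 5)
Your proof is correct and follows the route the paper intends: the paper's own proof is just the one-line ``Use Definition~\ref{def:quasiiso}'', and your standard-basis argument via Lemmas~\ref{lemma:thin2}--\ref{lemma:thin4}, including the key certification that $\sigma$ carries a standard basis of $U$ to a \emph{standard} basis of $W$ before invoking the basis-independence of the scalars, is exactly the natural elaboration of that instruction. The only micro-detail left implicit in your (ii)$\Rightarrow$(i) direction is the $i=0$ case for the relation $\sigma L = L\sigma$, where $Lu_0 = 0 = Lw_0$ follows from Lemma~\ref{lem:LFRsubspace} together with the definition of endpoint (or from Lemma~\ref{lemma:pushEi} when the endpoint is $0$).
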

\begin{proof}
Use Definition  \ref{def:quasiiso}.
\end{proof}

\section{Examples}

In this section we  discuss two examples. 
The first example concerns the Hamming graphs \cite[p. 261]{BCN}. 
The second example concerns the bipartite dual polar graphs \cite[p. 274]{BCN}.  
We show that for the first example $Q=T$, and for the second
example $Q \ne T$. 

We now recall the Hamming graphs.
Fix  integers    $D \ge 1$ and $N \ge 2$.
For the \emph{Hamming graph} $H(D,N)$ the vertex set consists of
the $D$-tuples of elements from $\{1,\dots,N\}$.  Two vertices  of $H(D,N)$
are adjacent whenever they differ in precisely one coordinate.
The graph $H(D,N)$ is distance-regular 
and  $Q$-polynomial \cite[Section 9.2]{BCN}.






\begin{proposition}
Fix  a vertex $x$ of $H(D,N)$.
Let $Q=Q(x)$  and  $T=T(x)$.
Then $Q=T$.
\end{proposition}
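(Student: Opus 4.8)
The plan is to use the main result Theorem~\ref{thm:QisT}, specifically the equivalence of (i) and (iv): to prove $Q=T$ it suffices to show that $H(D,N)$ admits no pair of quasi-isomorphic irreducible $T$-modules with different endpoints. The key structural input is that the Hamming graph is known to have all its irreducible $T$-modules thin, so I may invoke the thin-module machinery of Section~10. In particular, by Proposition~\ref{prop:thin5}, two thin irreducible $T$-modules $U,W$ are quasi-isomorphic if and only if they have the same diameter $d$ and the same scalar data $\{a_i\}_{i=0}^d$ and $\{x_i\}_{i=1}^d$. So the whole question reduces to understanding these scalars for $H(D,N)$ and checking whether the endpoint $r$ of a thin module is forced by its $(d,\{a_i\},\{x_i\})$ data.

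The central step is therefore to compute, for a thin irreducible $T$-module $W$ of $H(D,N)$ with endpoint $r$ and diameter $d$, the diagonal entries $a_i(W)$ (the action of $F$) and the subdiagonal entries $x_i(W)$ (the action of $L$) in a standard basis. For the Hamming graph these intersection-number-type quantities are explicitly computable from the parameters $D$, $N$, $r$, and $d$. I would carry this out by recalling the well-known parameters of $H(D,N)$: a vertex at distance $i$ from $x$ has $b_i=(D-i)(N-1)$ neighbors at distance $i+1$, $c_i=i$ neighbors at distance $i-1$, and $a_i=i(N-2)$ neighbors at the same distance. Using $A=L+F+R$ and the quantum-decomposition action $LE_{r+i}^*W\subseteq E_{r+i-1}^*W$, $FE_{r+i}^*W\subseteq E_{r+i}^*W$, $RE_{r+i}^*W\subseteq E_{r+i+1}^*W$ from Lemma~\ref{lem:LFRsubspace}, the scalars $a_i(W)$ and $x_i(W)$ come out as explicit polynomial expressions in $r$, $i$, $D$, and $N$.

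The decisive point — and the main obstacle — is to show that these scalars determine the endpoint $r$, i.e.\ that $r$ can be recovered from the data $(d,\{a_i\}_{i=0}^d,\{x_i\}_{i=1}^d)$. The plan is to exhibit one scalar, say $a_0(W)$ or the leading $x_1(W)$, whose value depends genuinely (and injectively) on $r$ for fixed $d$. Concretely, for $H(D,N)$ the top-endpoint behavior of $F$ on $E_r^*W$ should yield $a_0(W)$ as a strictly monotonic function of $r$, so that two thin modules with equal $a$- and $x$-data are forced to share the same endpoint. Once this is established, Proposition~\ref{prop:thin5} shows that any two quasi-isomorphic irreducible $T$-modules have the same endpoint, so condition (iv) of Theorem~\ref{thm:QisT} fails, whence (i) fails and $Q=T$. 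I expect the arithmetic of verifying strict monotonicity in $r$ to be the one genuinely nontrivial computation; everything else is assembling already-proven lemmas.
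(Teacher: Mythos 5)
Your proposal is sound, but it takes a genuinely different route from the paper. The paper's proof is short and direct: it defines the dual adjacency matrix $A^*=\sum_{i=0}^D\theta_i^*E_i^*$ with $\theta_i^*=(N-1)(D-i)-i$, observes that these scalars are mutually distinct so that $A^*$ generates $M^*$ (hence $A,A^*$ generate $T$), and then verifies by a combinatorial count the identity $A^*=F+LR-RL$, which puts $A^*$ in $Q$ and immediately gives $Q=T$ without ever invoking Theorem~\ref{thm:QisT}. You instead run the module-theoretic criterion backwards: show that no pair of quasi-isomorphic irreducible $T$-modules with distinct endpoints exists, so that condition (iv) of Theorem~\ref{thm:QisT} fails. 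That works, and your decisive monotonicity claim is correct: the known scalar data for $H(D,N)$ (Terwilliger, \emph{The subconstituent algebra of an association scheme III}, Example 6.1) gives $a_i(W)=(N-1)(D-d+i)-Nr-i$, so $a_0(W)=(N-1)(D-d)-Nr$ is strictly decreasing in $r$ for fixed $d$, and Proposition~\ref{prop:thin5} then forces quasi-isomorphic modules to share endpoints. Two caveats on your plan. First, you need the fact that \emph{every} irreducible $T$-module of $H(D,N)$ is thin; this is true but is a nontrivial classification result that must be cited, not merely asserted as "known structural input." Second, your suggestion that $a_i(W)$ and $x_i(W)$ fall out of the intersection numbers $b_i,c_i,a_i$ together with $A=L+F+R$ is too optimistic: those numbers determine the action only on the primary module, and for modules of higher endpoint you need the actual structure theory (local eigenvalues, or the tensor decomposition of the Hamming scheme), so this step should also be an appeal to the literature. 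In exchange for these imported inputs, your argument illustrates the paper's main machinery and generalizes to any graph whose thin-module data separates endpoints, whereas the paper's argument buys brevity, self-containedness, and the strictly stronger explicit conclusion that $A^*\in Q$.
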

\begin{proof}
Define $A^*=\sum_{i=0}^D\ \theta_i^*E_i^*$,
where $\theta_i^* = (N-1)(D-i)-i$  for $0 \le i \le D$.
In \cite{Talg}, $A^*$ is called the dual adjacency matrix.
Observe that $\{\theta_i^*\}_{i=0}^D$ are mutually distinct.
Therefore $A^*$ generates $M^*$.
Consequently $A,A^*$ generate $T$. 
By construction $A \in Q$.
By combinatorial counting we obtain
$$
A^*=F+LR-RL.
$$
So $A^* \in Q$. By these comments $Q=T$.
\end{proof}

We now turn our attention to the bipartite dual polar graphs.
Fix a prime power $q$ and an integer   $D \ge 2$.
The bipartite dual polar graph $D_D(q)$ is described in \cite[Section 9.4]{BCN}; 
see also \cite{Miklavic,Talg3,Worawann}.
This graph is distance-regular 
and  $Q$-polynomial \cite[Section 9.4]{BCN}.

\begin{proposition}
Fix  a vertex $x$ of $D_D(q)$.
Let $Q=Q(x)$  and  $T=T(x)$.
Then $Q \ne T$. 
\end{proposition}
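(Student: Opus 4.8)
The plan is to use Theorem \ref{thm:QisT} together with Proposition \ref{prop:thin5}, reducing the claim $Q\ne T$ to the combinatorial task of exhibiting two thin irreducible $T$-modules that are quasi-isomorphic but have different endpoints. Indeed, by the equivalence ${\rm (i)}\Leftrightarrow{\rm (iv)}$ in Theorem \ref{thm:QisT}, to prove $Q\ne T$ it suffices to produce a pair of quasi-isomorphic irreducible $T$-modules with distinct endpoints. The bipartite dual polar graphs are well studied and their Terwilliger algebra is known to have all irreducible modules thin (see \cite{Caughman,Worawann}), so I would invoke this structure theory and work entirely with thin modules, where Proposition \ref{prop:thin5} gives a clean numerical criterion for quasi-isomorphism.

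First I would recall or cite the classification of the irreducible $T$-modules for $D_D(q)$, listing for each such module $W$ its endpoint $r(W)$, its diameter $d(W)$, and the associated scalars $a_i(W)$ and $x_i(W)$ from Lemma \ref{lemma:thin4}. These scalars are computable from the intersection numbers of the graph, and in the $Q$-polynomial bipartite case the diagonal entries $a_i$ vanish (since a bipartite graph has $a_i=0$ for its intersection numbers, forcing $F=\sum_i E_i^*AE_i^*=0$, hence $a_i(W)=0$ for every module and every $i$). Thus the only data distinguishing thin modules up to quasi-isomorphism are the diameter $d$ and the sequence $\{x_i(W)\}_{i=1}^d$. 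The key step is then to locate two irreducible $T$-modules $U,W$ with $r(U)\ne r(W)$ but with $d(U)=d(W)$ and $x_i(U)=x_i(W)$ for $1\le i\le d$. By Proposition \ref{prop:thin5} this makes $U,W$ quasi-isomorphic, and by Theorem \ref{thm:QisT} we conclude $Q\ne T$.

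The natural candidates come from the module-structure of dual polar graphs: there is a family of thin modules whose $x_i$-values are governed by the same Leonard-pair recurrence but are realized at different endpoints, because the dual polar graph has a rich supply of irreducible modules indexed by endpoint $r$ ranging over $1\le r\le \lfloor D/2\rfloor$ or similar. Concretely, I would compute the $x_i(W)$ as products/ratios of the intersection numbers $c_i,b_i$ restricted to the module and check that two modules with endpoints differing by a suitable amount share identical $\{x_i\}$ sequences; the $q$-dependence of the dual polar intersection numbers tends to produce such coincidences precisely because of the $Q$-polynomial symmetry. A clean way to finish is to exhibit two explicit modules — for instance the trivial/primary module restricted appropriately versus a second module of the same diameter — and verify the scalar equalities directly from the parameter array.

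The main obstacle will be the explicit computation and bookkeeping of the scalars $x_i(W)$ for the bipartite dual polar graph across modules of different endpoints, and verifying that the matching of these sequences genuinely occurs (rather than merely appearing plausible). This requires a careful hold on the intersection numbers of $D_D(q)$ and on how each irreducible $T$-module inherits its action of $L,R$ from the global recurrence; the delicate point is confirming both that $d(U)=d(W)$ \emph{and} that the full $x_i$-sequences agree while the endpoints differ. Once that single explicit pair is pinned down, the implication to $Q\ne T$ is immediate from the machinery already established, so the entire difficulty is concentrated in this one combinatorial verification.
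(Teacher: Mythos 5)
Your reduction is exactly the paper's: invoke Theorem \ref{thm:QisT} (i)$\Leftrightarrow$(iv), note via Caughman that all irreducible $T$-modules of $D_D(q)$ are thin, and apply the numerical criterion of Proposition \ref{prop:thin5} to a pair of modules with equal diameters and $\{x_i\}$-sequences but different endpoints. Your observation that bipartiteness forces $F=0$, hence $a_i(W)=0$ for every module, is also correct and matches the paper. However, there is a genuine gap: the entire content of the proposition lies in actually exhibiting such a pair and verifying the scalar equalities, and you explicitly defer this (``once that single explicit pair is pinned down\dots the entire difficulty is concentrated in this one combinatorial verification''). Nothing in your argument shows such a coincidence of $\{x_i\}$-sequences across endpoints \emph{must} occur --- indeed, the whole point of the paper's Hamming graph example is that for some $Q$-polynomial graphs it does \emph{not} occur and $Q=T$. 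So an appeal to ``$Q$-polynomial symmetry tending to produce coincidences'' cannot substitute for the verification.

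The paper closes this gap with specific citations: by \cite[Theorem 15.6]{Caughman} there is, up to isomorphism, a unique irreducible $T$-module $U$ with endpoint $1$ and diameter $D-2$, and a unique irreducible $T$-module $W$ with endpoint $2$ and diameter $D-2$; by \cite[p.~200]{Talg3} both satisfy $a_i(U)=a_i(W)=0$ and
$$
x_i(U)=x_i(W)=\frac{q^{i+1}(q^i-1)(q^{D-i-1}-1)}{(q-1)^2} \qquad (1\le i\le D-2),
$$
whence Proposition \ref{prop:thin5} applies directly. Note also that your suggested candidate --- the trivial/primary module against a second module of the same diameter --- would fail: the primary module has endpoint $0$ and diameter $D$, and no other irreducible $T$-module has diameter $D$, so it cannot participate in a quasi-isomorphism with a module of different endpoint. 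To complete your argument you must name the endpoint-$1$/endpoint-$2$ pair (or an equally explicit one) and either compute or cite the matching $x_i$-values.
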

\begin{proof}
The irreducible $T$-modules are described in \cite{Caughman}.
By \cite[Lemma 9.2]{Caughman} these modules are thin.
By \cite[Theorem 15.6]{Caughman}, up to isomorphism there exists
a unique irreducible $T$-module $U$ with endpoint 1 and diameter $D-2$.
By \cite[Theorem 15.6]{Caughman}, up to isomorphism there exists
a unique irreducible $T$-module $W$ with endpoint 2 and diameter $D-2$.
By \cite[p. 200]{Talg3},   $a_i(U)=a_i(W)=0$ for $0 \le i \le D-2$.
Also by \cite[p. 200]{Talg3},
$$
x_i(U)=x_i(W)=\frac{q^{i+1}(q^i-1)(q^{D-i-1}-1)}{(q-1)^2} \ \ \ \ \ \ \ \ \ \ (1\le i\le D-2).
$$
By these comments $U$ and $W$ satisfy the conditions
of Proposition \ref{prop:thin5}(ii). 
By Proposition \ref{prop:thin5} the $T$-modules $U$ and $W$
are quasi-isomorphic. By this and Theorem   \ref{thm:QisT}, $Q \ne T$.
\end{proof}


%
%
%

\end{document}